\newcommand{\mbR}{\mathbb{R}}
\newcommand{\mbQ}{\mathbb{Q}}
\def\mbP{\mathbb{P}}
\newcommand{\<}{\leq}
\def\>{\geq}
\def\subset{\subseteq}
\newcommand{\tbf}{\textbf}
\newcommand{\num}{\equiv}
\newcommand{\lin}{\sim}
\newcommand{\bir}{\dashrightarrow}
\def\mcO{\mathcal{O}}
\def\injective{\hookrightarrow}
\newtheorem{theorem}{Theorem}[section]
\newtheorem{lemma}[theorem]{Lemma}
\newtheorem{proposition}[theorem]{Proposition}
\newtheorem{corollary}[theorem]{Corollary}
\newtheorem*{mainthma}{Theorem A} 
\newtheorem*{mainthmb}{Theorem B}
\newtheorem*{mainthmc}{Theorem C}
\theoremstyle{remark}
\newtheorem{remark}[theorem]{Remark}
\theoremstyle{definition}
\newtheorem{definition}[theorem]{Definition}
\theoremstyle{definition}
\numberwithin{equation}{section}
\def\Supp{\operatorname{Supp}}
\def\dim{\operatorname{dim}}
\def\chr{\operatorname{char}}
\def\Ex{\operatorname{Ex}}
\def\Spec{\operatorname{Spec}}
\def\Pic{\operatorname{Pic}}
\def\Alb{\operatorname{Alb}}
\def\max{\operatorname{max}}
\author{Omprokash Das}
\address{Department of Mathematics\\
University of California, Los Angeles\\
520 Portola Plaza\\
Math Sciences Building 6363.}
\email{das@math.ucla.edu}
\author{Joe Waldron}
\address{Mathematics Section\\
FSB EPFL SMA\\
Station 8 - MA Building\\
CH-1015 Lausanne\\
 Switzerland.}
\email{joseph.waldron@epfl.ch}
\date{}
\begin{document}
\title[Abundance for $3$-folds]{On the Abundance Problem for $3$-folds in characteristic $p>5$}
\maketitle

\begin{abstract}In this article we prove two cases of the abundance conjecture for $3$-folds in characteristic $p>5$: $(i)$ $(X, \Delta)$ is klt and $\kappa(X, K_X+\Delta)=1$, and $(ii)$ $(X, \Delta)$ is klt, $K_X+\Delta\equiv 0$ and $X$ is not uniruled.
\end{abstract}

\section{Introduction}

%\begin{theorem}\label{thm:not_uniruled}
%Let $(X,\Delta)$ be a projective klt $3$-fold pair over an uncountable algebraically closed field of characteristic $p>5$. Suppose that $K_X+\Delta$ is not of maximal nef dimension and also that $X$ is not uniruled.  Then $K_X+\Delta$ is semi-ample.
%\end{theorem}

The log minimal model program is now known for $3$-folds in characteristic $p>5$ (see for example \cite{HX15}, \cite{Bir16} and \cite{BW14}).  However, the abundance conjecture is still largely open in positive characteristic.  We prove some results in this direction.  Our first result is under the additional assumption of $\kappa(K_X+B)=1$.  Together with the results of \cite{Wal15} and \cite{Bir16} this completes the proof of abundance whenever $\kappa(K_X+B)>0$.  The proof is based on the ideas of Kawamata \cite[Theorem 7.3]{Kaw85} in characteristic $0$, along with some recent results of Tanaka \cite{Tan15, Tan16a}; Birkar, Chen, Zhang \cite{BCZ15}, and Waldron \cite{Wal15}. 

\begin{mainthma}[Theorem \ref{thm:kappa1}]
	Let $(X, \Delta)$ be a projective klt $3$-fold pair over an algebraically closed field $k$ of characteristic $p>5$, such that $K_X+\Delta$ is a nef $\mbQ$-Cartier divisor with $\kappa(X,K_X+\Delta)=1$. Then $K_X+\Delta$ is semi-ample. 
\end{mainthma}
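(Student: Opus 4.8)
The plan is to prove semi-ampleness by analyzing the Iitaka fibration associated to $K_X+\Delta$. Since $\kappa(X, K_X+\Delta)=1$, the Iitaka fibration maps $X$ to a curve, and the key is to show that $K_X+\Delta$ is numerically (hence, since it is already nef, actually) a pullback of an ample divisor from this curve. Let me sketch the steps.

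The approach follows Kawamata's strategy in characteristic zero.

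First, since $\kappa(X, K_X+\Delta)=1$, some multiple $m(K_X+\Delta)$ defines a rational map to $\mathbb{P}^1$, whose Iitaka fibration (after resolving/normalizing) gives a morphism $f: X \to C$ to a smooth projective curve $C$, with general fiber $F$ on which $(K_X+\Delta)|_F \equiv 0$. The goal becomes showing that $K_X+\Delta \equiv f^*D$ for some ample $\mathbb{Q}$-divisor $D$ on $C$ — if I can establish this numerical equivalence, then since the base is a curve and $D$ is ample, semi-ampleness follows readily (a nef divisor numerically equivalent to the pullback of an ample divisor on a curve is semi-ample).

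The heart of the argument is a canonical bundle formula / positivity-of-the-moduli-part computation in the style of \cite{Kaw85}. One writes $K_X+\Delta \sim_{\mathbb{Q}} f^*(K_C + B_C + M_C)$, where $B_C$ is the discriminant (boundary) part measuring the singularities of the fibers and $M_C$ is the moduli part. The content is to show the right-hand side is ample (or at least that its numerical class is the pullback of an ample class). This requires two inputs: (a) positivity of the moduli part $M_C$, which in characteristic zero comes from variation-of-Hodge-structure arguments but in positive characteristic must be replaced by the recent results of Tanaka \cite{Tan15, Tan16a} and the subadditivity/positivity results quoted above (for instance weak positivity of direct images or the results of \cite{BCZ15} and \cite{Wal15}); and (b) controlling the degree of $K_C + B_C + M_C$ on the curve to ensure it is positive, using that $\kappa = 1$ forces nontrivial positivity of $K_X+\Delta$ along $C$.

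The main obstacle I anticipate is establishing the positivity of the moduli/direct-image part in characteristic $p$, since the classical Hodge-theoretic semipositivity arguments fail. I would route around this using the abundance and positivity results for the fibers: the general fiber $F$ is a surface (or lower-dimensional) with $(K_X+\Delta)|_F \equiv 0$, so one applies known abundance for the fibers together with the positivity results of Tanaka and the weak-positivity statements of \cite{BCZ15, Wal15} to control the variation. A secondary technical difficulty is dealing with wild ramification and non-reducedness of fibers in characteristic $p$, which affects the discriminant term $B_C$; here I would lean on the fact that $p>5$ and on Tanaka's results to ensure the relevant singularities remain mild enough (klt/canonical) so that the canonical bundle formula behaves as in characteristic zero. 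Once positivity is secured, concluding semi-ampleness of the nef divisor $K_X+\Delta$ on the base curve is the easy final step.
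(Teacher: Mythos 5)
Your plan has a genuine gap at its core step (a). The positivity of the moduli part $M_C$ in a canonical bundle formula is exactly the ingredient that does \emph{not} exist in characteristic $p$: it is the Hodge-theoretic/variation-of-moduli input that fails, and none of the results you invoke supplies a substitute. Tanaka's theorems \cite{Tan15, Tan16a} give MMP and abundance for surfaces over excellent (possibly imperfect) base fields; \cite{BCZ15} and \cite{Wal15} give abundance statements and a descent lemma for nef divisors --- none of them gives nefness or weak positivity of a moduli divisor for a $3$-fold fibred over a curve, and indeed semipositivity of direct images genuinely fails in positive characteristic for wildly behaved fibrations. So "route around this using the abundance and positivity results for the fibers" is a restatement of the problem, not a proof. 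A related danger: you argue with the \emph{general} fibre, but in characteristic $p$ there is no generic smoothness, and general closed fibres of the Iitaka fibration can be badly singular; this is why the paper works throughout with the generic fibre $Y_\eta$ over the function field $K(Z)$, whose klt-ness is inherited (Corollary \ref{cor:singularities-generic-fiber}).

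Your final step is also false as stated: a nef divisor numerically equivalent to the pullback of an ample divisor on a curve need \emph{not} be semi-ample. Take $X=C\times E$ with $E$ elliptic, $f$ the projection to $C$, and $L=f^*D+\pi^*P$ with $D$ ample and $P\in\Pic^0(E)$ non-torsion; then $L$ is nef and $L\equiv f^*D$, yet $H^0(X,mL)=H^0(C,mD)\otimes H^0(E,mP)=0$ for all $m>0$. The gap between $\equiv$ and $\sim_{\mathbb{Q}}$ is the crux, and the paper's proof (Theorem \ref{thm:kappa1}) is built precisely to avoid it: after passing to a crepant terminal model \cite[Theorem 1.7]{Bir16}, it proves that $\mu^*(K_X+\Delta)|_{Y_\eta}$ is $\mathbb{Q}$-\emph{linearly} trivial, by running the MMP and abundance for the klt surface $(Y_\eta,\Delta_{Y_\eta})$ over $K(Z)$ (so that $\kappa=0$ gives $K_W+\Delta_W\sim_{\mathbb{Q}}0$ on a minimal model) and then killing the difference of exceptional divisors $G^+-G^-$ by a negative-definiteness argument. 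Only then does it invoke \cite[Lemma 3.2]{Wal15}, whose hypothesis is nefness together with $\mathbb{Q}$-linear (not numerical) triviality on the generic fibre, to conclude $\mu^*(K_X+\Delta)\sim_{\mathbb{Q}}f^*D$; finally $\kappa(D)=1$ on the curve $Z$ forces $D$ ample. To repair your write-up you should discard the canonical-bundle-formula framework and restructure along these lines: fibre-wise $\mathbb{Q}$-linear triviality over the generic point, then descent.
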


We also obtain the following results in case of $K_X+\Delta\num 0$, when $X$ is not uniruled:

\begin{mainthmb}[Theorem \ref{thm:not-uniruled}]
Let $(X,\Delta)$ be a projective klt $3$-fold pair over an algebraically closed field $k$ of characteristic $p>5$, such that $K_X+\Delta\num 0$ and $X$ is not uniruled.  Then $K_X+\Delta$ is semi-ample, i.e., $K_X+\Delta\sim_\mbQ 0$.\\
\end{mainthmb}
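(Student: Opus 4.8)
The plan is to reduce to the case $\Delta=0$, $K_X\num 0$ and then to prove the non-vanishing statement $\kappa(X,K_X)\>0$. This reduction suffices: once some $mK_X$ (which is numerically trivial) has a non-zero global section, the resulting effective divisor is numerically equivalent to zero, hence is zero, so $mK_X\sim 0$ and $K_X\sim_{\mbQ}0$.

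For the reduction I may first pass to a small $\mbQ$-factorialization, which is crepant and preserves both non-uniruledness and the conclusion, so that $X$ is $\mbQ$-factorial. As $(X,\Delta)$ is klt, so is $(X,0)$, and I run a $K_X$-MMP. If $K_X$ were not pseudoeffective this MMP would terminate in a Mori fibre space, whose total space is covered by the rational curves spanning the contracted extremal ray and is therefore uniruled; since every MMP step is birational, $X$ would then be uniruled, contrary to assumption. Hence $K_X$ is pseudoeffective. From $K_X\num-\Delta$ the classes $[\Delta]$ and $-[\Delta]=[K_X]$ both lie in the pseudoeffective cone, which contains no line, so $\Delta\num 0$; as $\Delta\>0$ this forces $\Delta=0$, and $K_X\num 0$.

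Assume now $K_X\num 0$. For $p>5$ klt $3$-fold singularities are Cohen--Macaulay, so $\mcO_X(K_X)$ is dualizing and Serre duality holds. Regarding $\chi(\mcO_X(mK_X))$ as a polynomial $P(m)$, each of its non-constant coefficients is an intersection number involving $K_X$ and therefore vanishes, so $P$ is the constant $\chi(\mcO_X)$; Serre duality gives $P(m)=-P(1-m)$, whence $\chi(\mcO_X)=-\chi(\mcO_X)$ and, since $p\neq 2$, $\chi(\mcO_X)=0$. Explicitly $1-h^1(\mcO_X)+h^2(\mcO_X)-p_g=0$ with $p_g=h^0(X,K_X)$. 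If $p_g\>1$ then $K_X$ is effective and numerically trivial, hence $K_X\sim 0$ and we are done; so I may assume $p_g=0$, which forces $h^1(\mcO_X)=1+h^2(\mcO_X)\>1$.

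It remains to treat $K_X\num 0$ with $h^1(\mcO_X)\>1$. When $\dim\Alb(X)>0$ I would study the Albanese morphism $a\colon X\map\Alb(X)$: its image generates the abelian variety, the general fibre has numerically trivial canonical class, and combining the known abundance in lower dimension (in particular for surfaces) with a canonical bundle formula along $a$---the base contributing nothing since $K_{\Alb(X)}=0$---should yield the required section. The main obstacle is the positive-characteristic behaviour at this stage: the Picard scheme can be non-reduced, so $h^1(\mcO_X)>0$ may occur even when $\Alb(X)=0$, and the Albanese morphism may be inseparable with wild fibres. Harnessing the torsors detected by $H^1(\mcO_X)$, together with Frobenius descent and the hypothesis $p>5$, to control these phenomena is where the real difficulty of the theorem lies.
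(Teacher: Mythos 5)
Your reduction to $\Delta=0$, $K_X\equiv 0$ is essentially sound and close in spirit to the paper's own (which quotes Koll\'{a}r's uniruledness criterion, Rational Curves IV.1.14, rather than running an MMP), and your closing observation that non-vanishing suffices --- an effective numerically trivial divisor is zero --- is correct. But from that point on there are two genuine problems. First, the Riemann--Roch/Serre duality step is not valid as stated: a klt (even terminal) $3$-fold in characteristic $p$ need not be Gorenstein, so $\mathcal{O}_X(mK_X)$ is a line bundle only when the Cartier index $r$ divides $m$; the function $m\mapsto\chi(\mathcal{O}_X(mK_X))$ is then at best a quasi-polynomial whose periodic terms come from the singular points and do not vanish when $K_X\equiv 0$, and Serre duality pairs $m$ with $1-m$, which lie in different residue classes mod $r$. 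So the identity $\chi(\mathcal{O}_X)=-\chi(\mathcal{O}_X)$ does not follow; moreover the Cohen--Macaulayness of klt $3$-fold singularities in characteristic $p>5$ that you invoke is itself a deep result, not something to be quoted casually.

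Second, and decisively, your last paragraph is not a proof: ``should yield the required section'' and ``this is where the real difficulty of the theorem lies'' concede exactly the content of the theorem. You also pivot on $h^1(\mathcal{O}_X)$, which, as you yourself note, is the wrong invariant in characteristic $p$ because the Picard scheme may be non-reduced. The paper avoids it entirely: since $K_X\equiv 0$, a suitable multiple of $K_X$ is a numerically trivial Cartier divisor, hence (by finiteness of N\'{e}ron--Severi torsion) some multiple lies in $\Pic^0(X)\cong\Pic^0(\Alb(X))$, giving $K_X\sim_{\mathbb{Q}}\alpha^*L$ for the Albanese morphism $\alpha$. It then argues by cases on $\dim\alpha(X)\in\{0,1,2,3\}$: the case $0$ is immediate since then $\Pic^0(X)=0$; case $1$ uses geometric normality of the generic fibre of the Albanese fibration for non-uniruled $X$ (Tanaka), strong $F$-regularity of canonical surface singularities for $p>5$ (Hara), results of Patakfalvi--Schwede--Zhang and Patakfalvi's relative semipositivity to show the base is an elliptic curve, and then Ejiri--Zhang; case $2$ uses a canonical bundle formula for the resulting elliptic fibration together with the classification of surfaces; and case $3$ is Hacon's appendix, which uses Hacon--Patakfalvi's Frobenius-stable generic vanishing theory to conclude $(p-1)K_X\sim 0$. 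None of this machinery, nor any substitute for it, appears in your proposal, so the theorem remains unproved.
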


\begin{mainthmc}[Theorem \ref{thm:uniruled-case}]
	Let $(X, \Delta)$ be a projective klt $3$-fold pair over an algebraically closed field $k$ of characteristic $p>5$. Assume that the following conditions are satisfied:
	\begin{enumerate}
	\item $K_X+\Delta\equiv 0$.
	\item The Albanese dimension of $X$ is not equal to $1$.
	\item If the Albanese dimension of $X$ is $2$ and $\Supp\Delta$ intersects the generic fiber of the Albanese morphism, then further assume that $\chr p>\max\{5,\frac{2}{\delta}\}$, where $\delta>0$ is the minimum non-zero coefficient of $\Delta$.\\
	
	 Then $K_X+\Delta\lin_{\mathbb{Q}}0$.\\
	\end{enumerate}
	\end{mainthmc}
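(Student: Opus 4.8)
The plan is to reduce the statement to a non-vanishing assertion and then to exploit the Albanese morphism. First I would observe that the conclusion follows as soon as $K_X+\Delta$ is $\mathbb{Q}$-effective: if $K_X+\Delta\sim_{\mathbb{Q}}E$ with $E\geq 0$, then $E\equiv 0$, and testing against the square of an ample divisor $H$ gives $E\cdot H^2=(K_X+\Delta)\cdot H^2=0$; since $E$ is effective and each of its components $E_i$ satisfies $E_i\cdot H^2=(H|_{E_i})^2>0$, this forces $E=0$, i.e. $K_X+\Delta\sim_{\mathbb{Q}}0$. So the whole problem is to prove $\kappa(X,K_X+\Delta)\geq 0$, and it is here that the hypotheses on the Albanese dimension enter. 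I would also record at the outset that if $X$ is \emph{not} uniruled then Theorem B already gives the result; hence the substance of the argument is the uniruled case, in which the image $a(X)$ of the Albanese map $a\colon X\to A:=\Alb(X)$ contains no rational curves and so has dimension $<3$. Combined with hypothesis (2), this leaves only Albanese dimension $0$ and $2$ to treat.

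I would then split the argument according to $d=\dim a(X)$. When $d=3$ the map $a$ is generically finite onto a subvariety of the abelian variety $A$; since $A$ has no rational curves the general fibre of $a$ is a point, so $X$ is not uniruled and we reduce to Theorem B (equivalently, on a suitable model $K_X$ is the ramification divisor over $K_A=0$, hence effective, giving $\kappa\geq 0$ directly). When $d=0$ we have $\Alb(X)=0$ and $q(X)=0$; if $X$ is uniruled I would pass to its maximal rationally connected fibration $X\bir Y$, note that $q(Y)=q(X)=0$ forces $\dim Y\in\{0,2\}$, and reduce the non-vanishing either to the rationally connected case, where $\Pic^\tau(X)$ is finite so that $K_X+\Delta\equiv 0$ is already torsion, or to non-vanishing on a non-uniruled surface with $q=0$, which is classical.

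The heart of the proof is the case $d=2$. Here I would take the Stein factorisation $f\colon X\to S$ of the Albanese map, so that $S$ is a normal surface finite over $a(X)\subseteq A$ and the general fibre $F$ of $f$ is a smooth curve with $(K_X+\Delta)|_F\equiv 0$, i.e. $2g(F)-2+\deg(\Delta|_F)=0$. Thus either $g(F)=1$ and $\Delta$ is vertical, or $g(F)=0$ and $\deg(\Delta|_F)=2$. In both sub-cases I would apply the canonical bundle formula for $f$ to write $K_X+\Delta\sim_{\mathbb{Q}}f^*(K_S+B_S+M_S)$, with $(S,B_S)$ klt, $B_S$ the discriminant part and $M_S$ the nef moduli part, and then deduce effectivity of $K_X+\Delta$ from non-vanishing for the klt surface pair $(S,B_S+M_S)$ — available in characteristic $p$ — together with the fact that $S\subseteq A$ is not uniruled, so that surface abundance applies. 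The elliptic sub-case $g(F)=1$ with $\Delta$ vertical triggers neither wild behaviour nor hypothesis (3): the moduli part is governed by the $j$-invariant and is semi-ample, so the reduction to $S$ is unproblematic.

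The main obstacle, and the reason for hypothesis (3), is the rational sub-case $g(F)=0$ with $\Supp\Delta$ meeting the general fibre. Then $\Delta|_F=\sum\delta_iP_i$ is a degree-$2$ boundary on $\mbP^1$ with each $\delta_i\geq\delta$, so the number of marked points is at most $2/\delta$. In characteristic $p$ the canonical bundle formula — in particular the $\mathbb{Q}$-triviality of the moduli part $M_S$ once it is numerically trivial — can fail because of inseparable base change and wild ramification of the associated marked-$\mbP^1$ fibration. The hypothesis $p>\max\{5,2/\delta\}$ is precisely the tameness condition $p>2/\delta\geq\#\{P_i\}$ that lets one trivialise the fibration after a tame (separable) base change, thereby establishing the formula with a nef, $\mathbb{Q}$-trivial moduli part; this is the step I expect to be the most delicate, and where the characteristic-$p$ inputs of \cite{Tan15,Tan16a} and \cite{Wal15} must be used. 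Once this is in place, pulling back the $\mathbb{Q}$-linear triviality of $K_S+B_S+M_S$ from $S$ completes the proof.
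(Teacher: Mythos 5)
Your case division (by Albanese dimension, then the vertical/horizontal dichotomy for the boundary in the dimension-$2$ case) matches the paper's, and your opening reduction (effective plus numerically trivial implies $\mathbb{Q}$-linearly trivial) is correct, as is your dimension-$3$ reduction to Theorem B (the paper instead forces $\Delta=0$ via \cite[Corollary IV.1.14]{Kol96} and invokes Hacon's appendix theorem, which is what Theorem B's own proof does in that case). The genuine gaps are in the two places where you must actually produce a $\mathbb{Q}$-linear equivalence. In the Albanese-dimension-$2$ case you rest everything on a canonical bundle formula $K_X+\Delta\sim_{\mathbb{Q}}f^*(K_S+B_S+M_S)$ with a nef (and, when numerically trivial, $\mathbb{Q}$-trivial) moduli part. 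No such formula is available in characteristic $p$: your claim in the elliptic sub-case that ``the moduli part is governed by the $j$-invariant and is semi-ample'' is a characteristic-zero statement, and the paper deliberately avoids moduli parts there, instead using \cite[Corollary 1.5]{PW17} to see that the geometric generic fibre is a smooth elliptic curve and then the weak formula $K_{X'}\sim_{\mathbb{Q}}f'^*K_{Y'}+E'$ with $E'\geq 0$ from \cite[2.14]{BCZ15}, concluding $\kappa(K_{Y'})\geq 0$ from surface classification because $Y'$ has maximal Albanese dimension. In the horizontal sub-case you correctly identify $p>2/\delta$ as a tameness condition on the at most $2/\delta$ marked points of the $\mathbb{P}^1$-fibres, but ``trivialise the fibration after a tame base change, thereby establishing the formula'' is exactly the hard step, not a routine one; it is the content of \cite[Proposition 6.4]{CTX15}, which the paper cites outright. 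As written, your treatment of the main case is a sketch whose crucial assertion is left unproved, with the hypothesis correctly placed but the argument missing. Note also that your assertion that the general fibre of $f$ is a smooth curve is not automatic in characteristic $p$ (generic smoothness fails); this is again what the cited results of \cite{PW17} and \cite{Tan15f} are for.

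Your Albanese-dimension-$0$ case is also gapped, and unnecessarily so. Passing to the MRC fibration $X\dashrightarrow Y$ and using $q(Y)=q(X)$, Graber--Harris--Starr-type arguments, and a reduction of non-vanishing to ``a non-uniruled surface with $q=0$'' imports characteristic-zero behaviour (rational connectedness and separable rational connectedness diverge in characteristic $p$), and in the sub-case $\dim Y=2$ your reduction would require precisely the canonical bundle formula for a fibration in rational curves discussed above --- now \emph{without} hypothesis (3) available, since that hypothesis only covers Albanese dimension $2$. The paper's argument is direct and avoids all of this: Albanese dimension $0$ forces $\Alb(X)=0$, hence $\Pic^0(X)=0$ by the isomorphism $\alpha^*:\Pic^0(\Alb(X))\to\Pic^0(X)$, and since numerically trivial line bundles are torsion modulo $\Pic^0$, the hypothesis $K_X+\Delta\equiv 0$ already implies that some multiple of $K_X+\Delta$ is linearly trivial. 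You in fact invoke this very mechanism (finiteness of $\Pic^\tau$) in your rationally connected sub-case, so the repair is to apply it to $X$ itself and delete the MRC detour entirely.
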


In characteristic zero, it is known by \cite{BDPP13} that a smooth variety is uniruled if and only if $K_X$ is not pseudo-effective. However, in positive characteristic there are many examples of uniruled varieties with $\kappa(K_X)\geq 0$ which arise via purely inseparable covers.  Thus ideally we would like to remove this condition. 

\begin{comment}

 However the best we can do in this direction is the following:

\begin{mainthmc}[Theorem \ref{thm:nu0}]
Let $(X, \Delta)$ be a projective klt $3$-fold pair over an algebraically closed field $k$ of characteristic $p>5$. Suppose that $K_X+\Delta\equiv 0$ and the Albanese dimension of $X$ is not $1$.  We must also assume that the minimal non-zero coefficient $\delta$ of $\Delta$ satisfies $\chr p>\frac{2}{\delta}$ when $X$ is of Albanese dimension $2$ and $\Supp\Delta$ intersects the generic fibre of the Albanese morphism. Then $K_X+\Delta\lin_{\mathbb{Q}}0$.\\
\end{mainthmc}

\end{comment}

We were informed that similar results were obtained independently around the same time by Zhang in \cite{Zha16}; however our techniques seem to be different from his.  

\subsection*{Acknowledgements} 
The authors would like to thank Christopher Hacon, Sho Ejiri, Zsolt Patakfalvi and Karl Schwede for valuable conversations.  We would also like to thank the organizers of the $2015$ AMS Summer Institute in Algebraic Geometry at Utah, where this work began. Part of this work was done when the first named author was visiting the School of Mathematics, Tata Institute of Fundamental Research (TIFR), Mumbai, as a Visiting Fellow (August $2015$-July $2016$). He would like to thank the institute for their accommodation and hospitality.

\section{Preliminaries}

\subsection{Generic fibre}

\begin{lemma}\cite[Lemma 2.20]{BCZ15}\label{lem:integral-generic-fiber}
	Let $f:X\to Y$ be a dominant morphism of finite type between two integral schemes of finite type over a field $k$ of arbitrary characteristic (not necessarily algebraically closed). Let $\eta$ be the generic point of $Y$, and $X_\eta$ the generic fibre. Then the following statements hold:
	\begin{enumerate}
		\item $X_\eta$ is an integral scheme.
		\item $K(X_\eta)\cong K(X)$.
		\item If $x'$ is a point in $X_\eta$ and $x$ its image in $X$ through the set theoretic inclusion $X_\eta\subset X$, then $\mcO_{X_\eta, x'}\cong\mcO_{X, x}$.\\
		\end{enumerate}
In particular, if $X$ is normal (or regular), then $X_\eta$ normal (or regular).\\
		
	\end{lemma}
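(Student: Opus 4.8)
The plan is to reduce everything to a statement about localization of a domain and then invoke standard commutative algebra. First I would choose affine charts: cover $X$ by affine opens $U=\Spec A$ each of which maps into a fixed affine open $V=\Spec B$ of $Y$. This is possible because $\eta$ lies in every nonempty open of $Y$, so every affine open of $Y$ contains $\eta$, and one may always refine a cover of $X$ so that each chart maps into an affine chart of $Y$. Since $X$ and $Y$ are integral, $A$ and $B$ are domains, and since $f$ is dominant the comorphism $B\map A$ is injective; the generic point $\eta$ corresponds to the zero ideal of $B$ and $K(Y)=\Frac(B)$. Writing $S$ for the image in $A$ of the multiplicative set $B\setminus\{0\}$ (which avoids $0$ because $A$ is a domain and $B\map A$ is injective), the key identity is
\[
U_\eta := U\times_Y \Spec K(Y) = \Spec\big(A\otimes_B \Frac(B)\big)\cong \Spec(S^{-1}A).
\]

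Granting this, parts (1) and (2) are immediate. Since $S^{-1}A$ is a localization of the domain $A$ it is itself a domain, so each $U_\eta$ is integral; the charts $U_\eta$ glue to $X_\eta$ and all contain the generic point $\xi$ of $X$ (which maps to $\eta$ by dominance and becomes the zero ideal of $S^{-1}A$), so $X_\eta$ is irreducible and reduced, i.e.\ integral. Its function field is the local ring at $\xi$, namely $\Frac(S^{-1}A)=\Frac(A)=K(X)$, which gives (2).

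For (3), a point $x'\in U_\eta$ is a prime $\mathfrak q\subset S^{-1}A$, its image $x\in U$ is the contraction $\mathfrak p=\mathfrak q\cap A$, and transitivity of localization gives $\mcO_{X_\eta,x'}=(S^{-1}A)_{\mathfrak q}\cong A_{\mathfrak p}=\mcO_{X,x}$. The final assertion then follows formally: normality and regularity of a scheme are conditions on its local rings, and by (3) every local ring of $X_\eta$ is isomorphic to a local ring of $X$, so if all local rings of $X$ are integrally closed domains (resp.\ regular), the same holds for $X_\eta$.

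There is no serious obstacle here; the only points requiring care are bookkeeping ones: verifying that the chosen affine cover of $X$ can be arranged to map into affine charts of $Y$, checking that the localization description is independent of the chart so that the pieces glue, and confirming that $\mathfrak q\mapsto \mathfrak q\cap A$ really identifies $\Spec(S^{-1}A)$ with the set-theoretic fiber $f^{-1}(\eta)=\{\mathfrak p : \mathfrak p\cap B=(0)\}\subset \Spec A$, which is what justifies the set-theoretic inclusion $X_\eta\subset X$ used in the statement of (3). Finite type of $f$ is not actually needed for any of the three conclusions; it only enters through the ambient setup.
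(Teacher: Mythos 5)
Your proof is correct. Note that the paper itself gives no proof of this lemma: it is quoted verbatim from \cite[Lemma 2.20]{BCZ15}, so there is no internal argument to compare against. Your argument is the standard one (and essentially the one in the cited reference): the identification $U\times_Y\Spec K(Y)\cong \Spec(S^{-1}A)$, with $S$ the image of $B\setminus\{0\}$ in $A$, reduces all three assertions to elementary facts about localization of domains, and the final claim about normality and regularity follows since both are conditions on local rings, which part (3) identifies with local rings of $X$. The bookkeeping points you flag (charts mapping into affine opens of $Y$, gluing, and the set-theoretic identification of $\Spec(S^{-1}A)$ with $f^{-1}(\eta)$) are all handled correctly, and your closing observation that the finite type hypothesis is not needed for these conclusions is also accurate.
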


\begin{corollary}\label{cor:singularities-generic-fiber}
	Let $f:X\to Y$ be a dominant morphism between two varieties with $X$ normal. Let $\eta$ be the generic point of $Y$ and $X_\eta$ the generic fibre. Further assume that $(X, \Delta)$ is a pair such that $K_X+\Delta$ is $\mbQ$-Cartier. If $(X, \Delta)$ has terminal, canonical, klt, plt, dlt or lc singularities, then the pair $(X_\eta, \Delta|_{X_\eta})$ has terminal, canonical, klt, plt, dlt or lc singularities, respectively.
	\end{corollary}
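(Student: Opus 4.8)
The plan is to reduce everything to a comparison of discrepancies and of coefficients, and to use Lemma \ref{lem:integral-generic-fiber} to transport divisorial valuations between $X$ and $X_\eta$. All six classes of singularities are defined through the discrepancies $a(E,X,\Delta)$ of prime divisors $E$ over $X$ together with the coefficients of $\Delta$, so it is enough to match these two pieces of data on the two sides. First I would set up a correspondence between prime divisors over $X_\eta$ and the \emph{horizontal} prime divisors over $X$, i.e.\ those whose centre on $Y$ is $\eta$. Since $K(X_\eta)\cong K(X)$ by Lemma \ref{lem:integral-generic-fiber}(2), a divisorial valuation of $K(X_\eta)$ is literally a divisorial valuation of $K(X)$, and the divisors over $X_\eta$ are exactly the valuations that are trivial on $K(Y)\subseteq K(X)$; these are precisely the horizontal divisors over $X$ (their centre on $Y$ is $\eta$, via spreading out the relevant model). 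The vertical components of $\Delta$ do not meet $X_\eta$, so $\Delta|_{X_\eta}$ is the restriction of the horizontal components of $\Delta$, and by Lemma \ref{lem:integral-generic-fiber}(3) the local rings, hence the coefficients of those components, are unchanged.

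The key step is the identity $a(E_\eta, X_\eta, \Delta|_{X_\eta}) = a(E, X, \Delta)$ for a horizontal divisor $E$ over $X$ with restriction $E_\eta$ over $X_\eta$. I would realise $E$ on a proper birational model $g:W\to X$ with $W$ normal, and restrict to the generic fibre to obtain $g_\eta:W_\eta\to X_\eta$, which is again proper birational (properness by base change, birationality via Lemma \ref{lem:integral-generic-fiber}(2)) with $W_\eta$ normal by Lemma \ref{lem:integral-generic-fiber}. The defining relation $K_W = g^*(K_X+\Delta) + \sum_i a_i E_i$ then restricts to $W_\eta$: because $\mcO_{Y,\eta}=K(Y)$ is a field, the relative and absolute canonical sheaves agree on the generic fibre, so $K_W|_{W_\eta}=K_{W_\eta}$ and likewise $K_X|_{X_\eta}=K_{X_\eta}$; pullback commutes with restriction to the generic fibre; each horizontal $E_i$ restricts to a prime divisor $(E_i)_\eta$ with the same coefficient $a_i$ (same local ring, by Lemma \ref{lem:integral-generic-fiber}(3)), while the vertical $E_i$ restrict to $0$. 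Comparing coefficients of $E_\eta$ yields the claimed equality.

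I would then conclude for each singularity type. The correspondence above is a bijection from horizontal divisors over $X$ onto divisors over $X_\eta$ that preserves discrepancies, and the coefficients of $\Delta|_{X_\eta}$ form a subset of those of $\Delta$; hence the numerical conditions defining terminal ($a>0$ over exceptional divisors), canonical ($a\geq 0$), and klt, plt, lc (the bounds $a>-1$ or $a\geq -1$ together with the corresponding conditions on the coefficients, e.g.\ $\lfloor\Delta\rfloor=0$ for klt) are all inherited by $(X_\eta,\Delta|_{X_\eta})$.

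The dlt case is the main obstacle, because its definition uses more than bare discrepancies. Here I would invoke the characterisation by the existence of an open $U\subseteq X$ with $(U,\Delta|_U)$ log smooth such that every divisor with centre in $X\setminus U$ has discrepancy $>-1$. Restricting $U$ to the generic fibre gives a log smooth open subset of $X_\eta$: Lemma \ref{lem:integral-generic-fiber} preserves regularity, and the local-ring isomorphism in part (3) preserves the normal crossing of $\Supp\Delta$ on the generic fibre, so $(U\cap X_\eta,\Delta|_{U\cap X_\eta})$ is log smooth; the discrepancy condition then transfers by the computation of the previous paragraph. The one subtlety to keep in view in characteristic $p$ is that all discrepancies must be computed on arbitrary proper birational models rather than on resolutions — which is exactly how the definitions are framed — so that no resolution of singularities is needed anywhere in the argument.
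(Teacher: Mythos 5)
Your proof is correct, but it takes a genuinely different route from the paper's. The paper's argument is a two-line localization: it invokes the principle from \cite[Chapter 2, 2.16]{Kol13} that
\[
\mathrm{discrep}(X,\Delta)=\inf\{\mathrm{discrep}(X_Q,\Delta_Q): Q\in X\},\qquad X_Q=\Spec\mcO_{X,Q},
\]
and then uses Lemma \ref{lem:integral-generic-fiber}(3) --- the local ring of a point of $X_\eta$ is isomorphic to that of its image in $X$ --- so that every localized pair entering the computation of $\mathrm{discrep}(X_\eta,\Delta|_{X_\eta})$ already occurs among the localized pairs of $(X,\Delta)$; all six classes (including dlt) are then inherited at once, with the subtleties delegated to the cited localization results for pairs over excellent schemes. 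You instead build an explicit valuation-theoretic dictionary: divisors over $X_\eta$ correspond to \emph{horizontal} divisors over $X$ (via $K(X_\eta)\cong K(X)$ and spreading out), and discrepancies match by restricting a model $g:W\to X$ to the generic fibre. This is more self-contained and yields finer information --- the exact identity $a(E_\eta,X_\eta,\Delta|_{X_\eta})=a(E,X,\Delta)$, and an explicit explanation of why the implication is one-directional (only horizontal divisors are seen by $X_\eta$) --- but it costs extra care at two points, both of which you handle: the identification $K_W|_{W_\eta}=K_{W_\eta}$ (the absolute and relative canonical divisors differ by a pullback from $Y$, whose restriction to the fibre over $\eta$ is trivial since $\eta$ is the spectrum of a field), and the dlt case, which is not captured by bare discrepancies and requires the characterization via a log smooth open subset, where ``log smooth'' over the imperfect field $K(Y)$ must be read as regular plus simple normal crossings --- exactly the conditions on local rings that Lemma \ref{lem:integral-generic-fiber} preserves. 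Both proofs run on the same engine, Lemma \ref{lem:integral-generic-fiber}, and both correctly avoid any appeal to resolution of singularities.
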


\begin{proof}
	Let $Q\in X$ be a point of $X$ (not necessarily a closed point) and $X_Q=\Spec \mcO_{X, Q}$, where $\mcO_{X, Q}$ is the local ring at $Q$. Then 
	\[\mbox{discrep}(X, \Delta)=\inf\{\mbox{discrep}(X_Q, \Delta_Q): Q\in X\}, \]
	where $\Delta_Q$ is the flat pull back of $\Delta$ by $X_Q\to X$. See \cite[Chapter 2, 2.16]{Kol13} for further discussions.
	
Thus from Lemma \ref{lem:integral-generic-fiber} it follows that if $(X, \Delta)$ has terminal, canonical, klt, plt, dlt or lc singularities, then so does the pair $(X_\eta, \Delta|_{X_\eta})$. 
	
	\end{proof}

\begin{definition}
	Let $f:Y\to X$ be a proper surjective morphism between two normal varieties. A $\mbQ$-divisor $D$ on $Y$ is called \emph{horizontal} with respect to $f$ or \emph{horizontal} over $X$ if $f(\Supp D)=X$, and \emph{vertical} if $f(\Supp D)\neq X$. 
	\end{definition}

\subsection{Iitaka Fibration}\label{subc:iitaka-fibration}
Since the resolution of singularities exists for $3$-folds in characteristic $p>0$ (see \cite{Cut04} and \cite{CP08, CP09}), the Iitaka fibration also exists (see the proof of \cite[Theorem 2.1.33]{Laz04a}).

Let $(X, \Delta)$ be a projective terminal $3$-fold pair in characteristic $p>0$ with $\kappa(X, K_X+\Delta)\geq 0$. Then the Iitaka fibration gives a diagram
		\[
		\xymatrixcolsep{3pc}\xymatrixrowsep{3pc}\xymatrix{  &Y\ar[dl]_\mu\ar[dr]^f&\\
		X && Z}
		\]
satisfying the following conditions:
\begin{enumerate}
	\item $Y$ and $Z$ are smooth projective varieties, $\dim Y=3$ and $\dim Z=\kappa(X, K_X+\Delta)$.
	\item $\mu:Y\to X$ is a log resolution of $(X, \Delta)$ and $f$ is a surjective morphism with $f_*\mcO_Y=\mcO_Z$.
	\item $\kappa(Y_{\eta}, (K_Y+\Delta_Y)|_{Y_{\eta}})=0$, where $\eta$ is the generic point of $Z$, $Y_{\eta}$ is the generic fibre of $f$, and $\Delta_Y=\mu^{-1}_*\Delta$ (see \cite[Lemma 2.3]{LP18}).
	\end{enumerate}

\subsection{Albanese morphism and rational curves}\label{subc:albanese-morphism}
Let $X$ be a normal projective variety over an algebraically closed field $k$ of arbitrary characteristic. From Section $9$ of \cite{FGA} we know that the Albanese morphism $\alpha: X\to \Alb(X)$ exists. It is well known that the induced morphism $\alpha^*:\Pic^0(\Alb(X))\to\Pic^0(X)_{\rm{red}}$ of abelian varieties is an isomorphism of group varieties (see \cite[Chapter 5]{Bad01}). In particular, $\alpha$ induces an isomorphism of groups
\begin{equation}\label{eqn:Pic-0-isomorphism}
	\xymatrixcolsep{3pc}\xymatrix{\alpha^*:\Pic^0(\Alb(X))\ar[r]^\cong & \Pic^0(X).}
\end{equation}

\begin{lemma}\label{lem:abelian-adjunction}
	Let $A$ be an abelian variety of dimension $g>0$, and $Z$ be a subvariety. Then for any resolution of singularities $f:Y\to Z$, $\kappa(Y)\>0$.
	\end{lemma}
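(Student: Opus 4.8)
The plan is to prove the sharper statement that $h^0(Y, K_Y) \geq 1$, which by definition of Kodaira dimension immediately gives $\kappa(Y) \geq 0$. The one special feature of the ambient space we exploit is that an abelian variety has trivial cotangent bundle: $\Omega_A^1 \cong \mcO_A^{\oplus g}$. Write $\iota : Z \injective A$ for the inclusion, $\psi := \iota \circ f : Y \to A$ for the composite, and $d := \dim Y = \dim Z \leq g$. Pulling back triviality along $\psi$ gives $\psi^*\Omega_A^1 \cong \mcO_Y^{\oplus g}$, and hence $\psi^*\Omega_A^d = \psi^*\bigl(\wedge^d \Omega_A^1\bigr)$ is a \emph{trivial} bundle on $Y$ of rank $N := \binom{g}{d} \geq 1$.

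The key step is then to transport this triviality to the dualizing sheaf $\omega_Y = \Omega_Y^d = K_Y$ (here I use that $Y$ is smooth of dimension $d$). First I would take the natural pullback map on differentials $\psi^*\Omega_A^1 \to \Omega_Y^1$ and apply $\wedge^d$, obtaining a morphism of $\mcO_Y$-modules
\[
\phi : \psi^*\Omega_A^d \longrightarrow \Omega_Y^d = \omega_Y.
\]
Since the source is trivial of rank $N$, the map $\phi$ is the same data as an $N$-tuple of global sections $s_1,\dots,s_N \in H^0(Y,\omega_Y)$, and $\phi$ is nonzero precisely when some $s_i \neq 0$. Thus the whole lemma reduces to showing that $\phi$ does not vanish identically.

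For non-vanishing I would argue on a dense open set. As $f$ is a resolution it restricts to an isomorphism over a dense open $V \subset Z$, which we may shrink to lie in the smooth locus $Z_{\mathrm{sm}}$; this locus is nonempty because $Z$ is a variety over an algebraically closed (hence perfect) field. Over $V$ the conormal sequence of the closed immersion $\iota$,
\[
\mcI/\mcI^2 \longrightarrow \iota^*\Omega_A^1 \longrightarrow \Omega_Z^1 \longrightarrow 0,
\]
shows $\iota^*\Omega_A^1 \surjective \Omega_Z^1$; taking $d$-th exterior powers, $\psi^*\Omega_A^d \to \omega_Y$ is surjective over $f^{-1}(V)$, so $\phi \neq 0$. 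This produces a nonzero section of $K_Y$ and finishes the proof; the argument is uniform, covering the case $d=0$ (where $Z$ is a point) as well.

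The point I expect to require the most care is exactly this last surjectivity-on-a-dense-open step, since everything must go through in characteristic $p$ and one is therefore \emph{not} permitted to invoke generic smoothness of the morphism $f$. Fortunately no such appeal is needed: the conormal sequence above is right-exact in every characteristic, and a variety over an algebraically closed field is generically smooth, so the surjectivity of $\psi^*\Omega_A^d \to \omega_Y$ over the locus where $f$ maps isomorphically onto $Z_{\mathrm{sm}}$ holds regardless of $\chr k$. This is what makes the simple construction via the trivial cotangent bundle robust in positive characteristic.
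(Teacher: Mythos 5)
Your proof is correct and is essentially the paper's own argument: both exploit the triviality of $\Omega^1_A$ to pull back translation-invariant forms and verify non-vanishing on a dense open set where $Z$ is smooth and $f$ is an isomorphism (the paper's choice of $n$ restricted basis $1$-forms forming a local free basis is exactly your conormal-sequence surjectivity in coordinates). The only difference is packaging: the paper wedges $n$ explicitly chosen $dx_{i_j}$ to produce one nonzero section of $\omega_Y$, while you bundle all $\binom{g}{d}$ wedges into a single sheaf map $\psi^*\Omega^d_A \to \omega_Y$ and argue it is nonzero.
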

\begin{proof}
	Let $dx_i\in H^0(A, \Omega^1_A)$ be a $k$-basis, for $i=1,2,\ldots, g$ and $z\in Z$ a smooth point of $Z$ contained in $Z-f(\Ex(f))$. Then if $\dim(Z)=n$, there exist  $dx_{i_j}$ for $ j=1,2,\ldots, n$  and an open subset $z\in U$ of $Z$ contained in $Z_{\mathrm{smooth}}\cap (Z-f(\Ex(f)))$ such that  
	 $\iota^*dx_{i_1}|_U, \iota^*dx_{i_2}|_U,\ldots, \iota^*dx_{i_n}|_U$ forms a free basis of $\Omega^1_Z(U)$ over $\mcO_Z(U)$, where $\iota:Z\injective A$. Since $\iota^*dx_{i_j}$'s are all global sections of $\Omega^1_Z$, it follows that $\iota^*dx_{i_1}\wedge\iota^*dx_{i_2}\wedge\cdots\wedge\iota^*dx_{i_n}\in H^0(Z, \Omega^n_Z)$ is a non-zero global section of $\Omega^n_Z$. Then $(f\circ\iota)^*(dx_{i_1}\wedge dx_{i_2}\wedge\cdots\wedge dx_{i_n})$ is a non-zero global section of $\Omega^n_Y=\omega_Y$, since it is non-zero on the open set $f^{-1}U$. Therefore $\kappa(Y)=\kappa(Y, \omega_Y)\>0$. 	
	
	\end{proof}

\begin{remark}\label{rmk:albanese-contracts-rational-curves}
From Lemma \ref{lem:abelian-adjunction} it follows that any proper morphism $f:X\to A$ from a variety $X$ to an abelian variety $A$ contracts all rational curves in $X$. 
\end{remark}

\begin{definition}[Uniruled]
A variety $X$ over a base field $k$ (not necessarily algebraically closed) is \emph{uniruled} if there exists a dominant rational map $f:Y\times \mbP^1\dashrightarrow X$ with $\dim(Y)=\dim(X)-1$.
\end{definition}

A proper variety $X$ over an uncountable algebraically closed field $k$ is uniruled if and only if there is a rational curve through a general point of $X$, and also if and only if there is a rational curve through every (closed) point of $X$ (see \cite[Remark 4.1(4)]{Deb01}).\\

\section{Main Theorems}
\subsection{Kodaira dimension $1$}

 In this subsection we will prove abundance when $\kappa(X, K_X+\Delta)=1$. We use some arguments of \cite[Theorem 7.3]{Kaw85}, with the fibre over a general point of the Iitaka fibration replaced by the generic fibre due to the possibility of badly singular closed fibres.

\begin{theorem}\label{thm:kappa1}
	Let $(X, \Delta)$ be a projective klt $3$-fold pair over an algebraically closed field $k$ of characteristic $p>5$, such that $K_X+\Delta$ is a nef $\mbQ$-Cartier divisor with $\kappa(X,K_X+\Delta)=1$. Then $K_X+\Delta$ is semi-ample. 
\end{theorem}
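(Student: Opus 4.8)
The plan is to adapt Kawamata's argument for Theorem 7.3 of \cite{Kaw85}, working with the generic fibre of the Iitaka fibration in place of a general closed fibre. First I would reduce to the case where $(X,\Delta)$ is $\mbQ$-factorial and terminal: since the $3$-fold MMP is available in characteristic $p>5$, take a crepant terminalization $g\colon(X',\Delta')\to(X,\Delta)$ with $K_{X'}+\Delta'=g^*(K_X+\Delta)$ nef and $\kappa(X',K_{X'}+\Delta')=1$; because $g_*\mcO_{X'}=\mcO_X$, semi-ampleness of $K_{X'}+\Delta'$ descends to $K_X+\Delta$, so we may assume $(X,\Delta)$ terminal. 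Now invoke the Iitaka fibration of Subsection~\ref{subc:iitaka-fibration}: a log resolution $\mu\colon Y\to X$ and a fibration $f\colon Y\to Z$ onto a smooth curve $Z$ (here $\dim Z=\kappa=1$), with $\Delta_Y=\mu^{-1}_*\Delta$ and $\kappa(Y_\eta,(K_Y+\Delta_Y)|_{Y_\eta})=0$ on the generic fibre $Y_\eta$. Set $L=\mu^*(K_X+\Delta)$, which is nef with $\kappa(Y,L)=\kappa(X,K_X+\Delta)=1$ and has the same Iitaka fibration $f$, so that $L|_{Y_\eta}$ is nef with $\kappa(Y_\eta,L|_{Y_\eta})=0$.

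The crucial step is to prove $L|_{Y_\eta}\sim_{\mbQ}0$. The generic fibre $Y_\eta$ is a smooth projective surface over the imperfect, non-closed field $K(Z)$, and by Corollary~\ref{cor:singularities-generic-fiber} the restricted pair $(Y_\eta,\Delta_Y|_{Y_\eta})$ is klt. Running the surface MMP for this pair produces a minimal model $\mu_\eta\colon Y_\eta\to W_\eta$ on which $K_{W_\eta}+\Delta_{W,\eta}$ is nef of Kodaira dimension $0$ and satisfies $L|_{Y_\eta}=\mu_\eta^*(K_{W_\eta}+\Delta_{W,\eta})$. By the abundance theorem for klt surfaces over imperfect fields in characteristic $p$ (Tanaka \cite{Tan15,Tan16a}), a nef log canonical divisor of Kodaira dimension $0$ on such a surface is $\sim_{\mbQ}0$; pulling back gives $L|_{Y_\eta}\sim_{\mbQ}0$. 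I expect this to be the main obstacle: it is precisely here that one needs surface abundance over the imperfect function field $K(Z)$, which is the whole reason for passing to the generic fibre rather than a possibly badly singular general closed fibre.

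Finally I would descend to the curve $Z$. Since $L|_{Y_\eta}\sim_{\mbQ}0$, for $m$ sufficiently divisible the sheaf $f_*\mcO_Y(mL)$ is a line bundle $\mcO_Z(D_m)$, and the evaluation map $f^*\mcO_Z(D_m)\to\mcO_Y(mL)$ yields $mL\sim f^*D_m+\Gamma_m$ with $\Gamma_m\>0$ vertical (it cannot contain $Y_\eta$, as $mL|_{Y_\eta}\sim 0$). Because $L$ is nef, $\Gamma_m\cdot C=mL\cdot C\>0$ for every curve $C$ contained in a fibre; combined with the negative semi-definiteness of the intersection form on the components of a fibre over a curve (Zariski's lemma), each fibral part of $\Gamma_m$ must be a nonnegative multiple of a full fibre, so $\Gamma_m\sim_{\mbQ}f^*G$ for some effective $\mbQ$-divisor $G$ on $Z$. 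Hence $mL\sim_{\mbQ}f^*(D_m+G)$.

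As $\kappa(Y,L)=1$, we have $h^0(Z,\mcO_Z(D_m))=h^0(Y,mL)\to\infty$, so $\deg(D_m+G)\>\deg D_m>0$ for $m\gg 0$; thus $D_m+G$ is ample on the curve $Z$ and $mL\sim_{\mbQ}f^*(\text{ample})$ is semi-ample. Therefore $L=\mu^*(K_X+\Delta)$ is semi-ample, and since $\mu_*\mcO_Y=\mcO_X$ the morphism defined by a suitable multiple of $L$ contracts the $\mu$-exceptional curves and hence factors through $\mu$, giving semi-ampleness of $K_X+\Delta$. Combined with the terminalization reduction, this completes the proof.
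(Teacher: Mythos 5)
Your proposal has the same architecture as the paper's proof: reduction to a $\mbQ$-factorial terminal model via \cite[Theorem 1.7]{Bir16}, passage to the generic fibre $Y_\eta$ of the Iitaka fibration, Corollary \ref{cor:singularities-generic-fiber} to get a klt surface pair over $K(Z)$, Tanaka's MMP and abundance over imperfect fields, and descent of the nef divisor to the curve $Z$ (your hands-on argument with $f_*\mcO_Y(mL)$ and Zariski's lemma is a reasonable substitute for the paper's citation of \cite[Lemma 3.2]{Wal15}). However, the step that you yourself flag as crucial contains a genuine gap.

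The assertion that running the surface MMP produces $\sigma\colon Y_\eta\to W$ with $L|_{Y_\eta}=\sigma^*(K_W+\Delta_W)$ is unjustified, and it is precisely the content of what must be proved. The MMP contracts $(K_{Y_\eta}+\Delta_{Y_\eta})$-negative curves and therefore yields the pullback formula for the log canonical divisor of the pair,
\[
K_{Y_\eta}+\Delta_{Y_\eta}=\sigma^*(K_W+\Delta_W)+\sum s_jF_j,\quad s_j>0,
\]
whereas $L|_{Y_\eta}=(K_{Y_\eta}+\Delta_{Y_\eta})-\sum r_iE_i|_{Y_\eta}$, with $r_i>0$ coming from terminality of $(X,\Delta)$. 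Your identity thus amounts to the exact cancellation $\sum r_iE_i|_{Y_\eta}=\sum s_jF_j$, for which there is no a priori reason: the curves $E_i|_{Y_\eta}$ need not be $\sigma$-exceptional, and a curve $C$ contracted in the MMP only satisfies $\bigl(\sum r_iE_i|_{Y_\eta}\bigr)\cdot C<0$; nothing forces $L|_{Y_\eta}\cdot C=0$, so the nef divisor $L|_{Y_\eta}$ does not automatically descend along the MMP steps. The paper closes exactly this gap: after Tanaka's abundance gives $K_W+\Delta_W\sim_\mbQ 0$, one writes $L|_{Y_\eta}\sim_\mbQ\sum s_jF_j-\sum r_iE_i|_{Y_\eta}=G^+-G^-$ with $G^+,G^-\geq 0$ sharing no components, observes that $G^+$ is $\sigma$-exceptional, and uses negative definiteness of the exceptional intersection matrix on the excellent surface $Y_\eta$ (\cite[Theorem 10.1]{Kol13}) together with nefness: $0\leq L|_{Y_\eta}\cdot G^+=(G^+)^2-G^+\cdot G^-$ and $G^+\cdot G^-\geq 0$ force $(G^+)^2\geq 0$, hence $G^+=0$; then $L|_{Y_\eta}\sim_\mbQ -G^-$ nef forces $G^-=0$ as well. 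Inserting this Hodge-index/negativity argument at your crucial step makes the rest of your proof go through.
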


	\begin{proof}
Since $(X, \Delta)$ has klt singularities, by  \cite[Theorem 1.7]{Bir16} there exists a crepant $\mathbb{Q}$-factorial terminal model for $(X,\Delta)$.   We may replace $X$ with this to assume that $(X, \Delta)$ is terminal with $\mbQ$-factorial singularities.
	
		Let the following diagram be the Iitaka fibration of $K_X+\Delta$ as in Subsection \ref{subc:iitaka-fibration}.
		\[
		\xymatrixcolsep{3pc}\xymatrixrowsep{3pc}\xymatrix{  &Y\ar[dl]_\mu\ar[dr]^f&\\
		X && Z}
		\]

Let $\{E_i\}$ be the exceptional divisors of $\mu$, and  set $\Delta_Y=\mu^{-1}_*\Delta$. Then we have 	
\begin{equation}\label{eqn:mu-log}
	K_Y+\Delta_Y=\mu^*(K_X+\Delta)+\sum r_iE_i, \quad r_i>0 \text{ for all } i.
	\end{equation}	
	
Let $\eta$ be the generic point of $Z$.  Since $(Y, \Delta_Y)$ has klt singularities, $(Y_{\eta}, \Delta_{Y_{\eta}})$ also has klt singularities by Corollary \ref{cor:singularities-generic-fiber}, where $K_{Y_\eta}+\Delta_{Y_\eta}=(K_Y+\Delta_Y)|_{Y_\eta}$.\\

$Y_{\eta}$ is a regular surface over the field $K(Z)$. By \cite[Theorem 1.1]{Tan16a} and \cite[Theorem 1.1]{Tan15} or \cite[Theorem 1.4 and 1.5]{BCZ15}, the LMMP and abundance theorems are known for $(Y_\eta, \Delta_{Y_\eta})$, and thus there exists a projective birational morphism $\sigma:Y_\eta\to W$ such that $K_{W}+\Delta_{W}$ is semi-ample, where $\Delta_{W}=\sigma_*\Delta_{Y_\eta}$. Then we have
\begin{equation}\label{eqn:sigma-log} 
		K_{Y_\eta}+\Delta_{Y_\eta}=\sigma^*(K_{W}+\Delta_{W})+\sum s_jF_j, \quad s_j>0 \text{ for all } i,
	\end{equation}	
	where $\{F_j\}$ are the exceptional divisors of $\sigma$; $s_j>0$ follows from \cite[Lemma 3.38]{KM98}.\\
	
Since $\kappa(W, K_{W}+\Delta_{W})=\kappa(Y_\eta, K_{Y_\eta}+\Delta_{Y_\eta})=0$ and $K_{W}+\Delta_{W}$ is semi-ample, $K_{W}+\Delta_{W}\sim_\mbQ 0$.		
Therefore we have 
\begin{equation}\label{eqn:sigma-log-trivial}
	K_{Y_\eta}+\Delta_{Y_\eta}\sim_\mbQ\sum s_jF_j, \quad s_j>0 \text{ for all } i.
	\end{equation}		
From adjunction on $Y_\eta$ we also get that
\begin{equation}\label{eqn:mu-log-adj}
	K_{Y_\eta}+\Delta_{Y_\eta}\sim_\mbQ\mu^*(K_X+\Delta)|_{Y_\eta}+\sum r_iE_i|_{Y_\eta}
	\end{equation}
From relation \eqref{eqn:sigma-log-trivial} and \eqref{eqn:mu-log-adj} we have,
\begin{equation}\label{eqn:mu-sigma-relation}
	\mu^*(K_X+\Delta)|_{Y_\eta}\sim_\mbQ\sum s_jF_j-\sum r_iE_i|_{Y_\eta}=G^+-G^-,
	\end{equation}		
such that $G^+\>0$ and $G^-\>0$ are two effective $\mbQ$-divisors on $Y_\eta$ with no common irreducible components.\\

We will show that $G^+=G^-=0$. On the contrary first assume that $G^+\neq 0$. It is clear that $G^+$ is $\sigma$-exceptional. Since $Y_\eta$ is a regular excellent surface, by \cite[Theorem 10.1]{Kol13} the intersection matrix of the exceptional divisors of $\sigma: Y_\eta\to W$ is a negative definite matrix. Thus $(G^+)^2<0$; also $G^+\cdot G^-\>0$.\\
On the other hand, since $\mu^*(K_X+\Delta)|_{Y_\eta}$ is nef, from relation \eqref{eqn:mu-sigma-relation} we get $G^+\cdot (G^+-G^-)\>0$, which is a contradiction.	Therefore $G^+=0$ and $\mu^*(K_X+\Delta)|_{Y_\eta}\sim_\mbQ -G^-$. Again since $\mu^*(K_X+\Delta)|_{Y_\eta}$ is nef, $G^-=0$. Therefore we have
\begin{equation}\label{eqn:mu-linearly-0}
	\mu^*(K_X+\Delta)|_{Y_\eta}\sim_\mbQ 0.
	\end{equation}

Now since $K_X+\Delta$ is nef, we may use \cite[Lemma 3.2]{Wal15} on the morphism $f:Y\to Z$ to assume  that there is a $\mathbb{Q}$-Cartier divisor $D$ on $Z$ such that $\mu^*(K_X+\Delta)\lin_{\mathbb{Q}} f^*D$.  Then $D$ is a divisor on a curve with $\kappa(D)=1$, and so is ample.	It then follows that $K_X+\Delta$ is semi-ample.\\ 
\end{proof}

\begin{proposition}\label{prop:relative-abundance}
Let $(X,\Delta)$ be a projective klt $3$-fold pair over an algebraically closed field $k$ of characteristic $p>5$. Let $f:X\to C$ be a projective morphism to a curve $C$ with $f_*\mcO_X=\mcO_C$.  Suppose that $K_X+\Delta$ is $f$-nef.  Then $K_X+\Delta$ is $f$-semi-ample.				
\end{proposition}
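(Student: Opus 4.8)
The plan is to analyse $N := K_X+\Delta$ by first understanding it on the generic fibre of $f$ and then propagating the resulting structure along $C$. First I would pass to the generic fibre $X_\eta$ over $\eta = \Spec K(C)$. Since $X$ is klt, hence normal, Lemma \ref{lem:integral-generic-fiber} shows $X_\eta$ is normal, and Corollary \ref{cor:singularities-generic-fiber} shows $(X_\eta,\Delta_\eta)$ is klt, where $\Delta_\eta = \Delta|_{X_\eta}$; moreover $N_\eta := N|_{X_\eta}$ is nef. As $X_\eta$ is a klt projective surface over the (imperfect) field $K(C)$, the minimal model program and abundance are available by \cite{Tan15, Tan16a, BCZ15}, exactly as in the proof of Theorem \ref{thm:kappa1}, so $N_\eta$ is semi-ample. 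Let $\nu \in \{0,1,2\}$ be its numerical dimension, which equals its Iitaka dimension. The argument then splits according to $\nu$.

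If $\nu = 0$, then $N_\eta \equiv 0$, and being semi-ample it satisfies $N_\eta \sim_{\mbQ} 0$. Applying \cite[Lemma 3.2]{Wal15} to $f : X \to C$, just as in Theorem \ref{thm:kappa1}, yields a $\mbQ$-Cartier divisor $D$ on $C$ with $N \sim_{\mbQ} f^*D$; such a divisor is relatively trivial, hence $f$-semi-ample, and this case is finished.

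For $\nu \geq 1$ I would reduce to an absolute statement by twisting: since $N$ is $f$-nef, $L := N + f^*H$ is nef on $X$ for any sufficiently ample $H$ on $C$, and $f$-semi-ampleness of $N$ follows once $L$ is shown to be semi-ample. Here $(f^*H)^2 = 0$ on the curve base, so $L^3 = N^3 + 3(N_\eta^2)\deg H$. When $\nu = 2$ we have $N_\eta^2 > 0$, so $L$ is nef and big for $H$ ample enough, and one can invoke Keel's semi-ampleness theorem in characteristic $p$ \cite{Keel99}, which reduces the semi-ampleness of the nef line bundle $L$ to that of its restriction $L|_{\mathbb{E}(L)}$ to the exceptional (non-big) locus, a proper closed subset. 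When $\nu = 1$, $L$ is nef but not big, and instead I would spread out the semi-ample fibration $X_\eta \to B_\eta$ determined by $N_\eta$ to a relative fibration over a dense open $U \subseteq C$, so that $N$ is $f$-semi-ample over $U$; this leaves only the finitely many fibres over $C \setminus U$, a problem local on $C$ at each bad point.

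The main obstacle in both cases $\nu \geq 1$ is the behaviour over the special points of $C$. There the fibres of $f$ may be non-normal, non-reduced, or otherwise badly singular, so the klt structure that powers surface abundance is lost on the vertical components, and a nef divisor on such a surface need not be semi-ample a priori. I expect to control this through Keel's theorem, whose strength is precisely that it detects semi-ampleness on a bad locus by descending to subvarieties of strictly smaller dimension, eventually reaching curves and points on which nef divisors are automatically semi-ample; the fibration inherited from the generic fibre is what identifies the relevant exceptional locus and supplies the descent in the $\nu=1$ case (where one reduces to a lower-dimensional relative problem over $C$). Organizing this descent so that it terminates --- verifying that at each stage the restricted divisor either becomes relatively big or drops in dimension, despite the loss of the klt hypothesis on vertical loci --- is the delicate point, and is where the characteristic-$p$ input genuinely enters.
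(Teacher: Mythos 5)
Your setup (pass to the generic fibre, invoke surface abundance there, split by $\nu=\kappa(K_{X_\eta}+\Delta_{X_\eta})$, twist by $f^*H$) matches the paper's framework, and the $\nu=0$ case is fine. But the proposal has a genuine gap in the two remaining cases, and above all in $\nu=1$, which is the only case the proposition actually requires a new argument for: as the paper notes, \cite[Theorem 1.6]{BCZ15} already handles $\nu=0,2$ over an arbitrary algebraically closed field, the hypothesis $k=\overline{\mathbb{F}}_p$ in \cite{BCZ15} being needed precisely when $\nu=1$. For $\nu=1$ you propose to spread the Iitaka fibration of $N_\eta$ out over a dense open $U\subseteq C$ and then handle the finitely many bad fibres by a Keel-type descent --- but this descent is exactly the step you leave unresolved, and it is genuinely problematic: the bad fibres need not be normal, reduced or klt, so no surface abundance statement applies to them, and this obstruction is the very reason \cite{BCZ15} had to assume $\overline{\mathbb{F}}_p$ (where nef divisors on surfaces are far better behaved). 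The $\nu=2$ case suffers the same incompleteness (Keel reduces semi-ampleness of $L$ to that of $L|_{\mathbb{E}(L)}$, and $\mathbb{E}(L)$ can contain entire bad fibres), though there an easy repair exists: replace $mf^*H$ by a $\mbQ$-linearly equivalent $H'$ with $(X,\Delta+H')$ klt via \cite[Theorem 1]{Tan15m} and cite abundance for nef and big klt log canonical divisors. Separately, your claim that $L=N+f^*H$ is nef ``for any sufficiently ample $H$'' is not automatic for an $f$-nef divisor; it holds here only because the cone theorem \cite[Theorem 1.7]{Wal17} bounds extremal rays by curves $\Gamma'$ with $(K_X+\Delta)\cdot\Gamma'\geq -6$, so that $K_X+\Delta+6f^*H$ is already nef --- this justification is missing from your argument.

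The idea you are missing, and the paper's key move in the $\nu=1$ case, is to never look at the bad fibres at all, by converting the relative problem into an \emph{absolute} Kodaira dimension $2$ problem. Concretely: semi-ampleness of $K_{X_\eta}+\Delta_{X_\eta}$ spreads out to a diagram $\phi:Y\to X$, $g:Y\to S$, $h:S\to C$ with $S$ a smooth surface, and $\phi^*(K_X+\Delta)$ is $\mbQ$-trivial on the generic fibre of $g$; then \cite[Lemma 3.2]{Wal15} (applied to $g$, not to $f$) gives a $\mbQ$-Cartier divisor $D$ on $S$ with $\phi^*(K_X+\Delta)\sim_\mbQ g^*D$. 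The twist $D+mh^*H$ is nef (by the cone theorem argument above, pushed through $g$) and big on $S$ (as $D$ is big over $C$), whence $\kappa(X,K_X+\Delta+mf^*H)=2$. Perturbing $mf^*H$ into the boundary by \cite[Theorem 1]{Tan15m} yields a klt pair $(X,\Delta+H')$ with $K_X+\Delta+H'$ nef of Kodaira dimension $2$, which is semi-ample by the already-established abundance theorem \cite[Theorem 1.1]{Wal15}; hence $K_X+\Delta$ is semi-ample over $C$. With this route the difficulty you correctly flagged --- loss of the klt structure on vertical loci --- simply never arises.
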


\begin{proof}
We follow the proof of \cite[Theorem 1.6]{BCZ15}, where the statement is proved over $\overline{\mathbb{F}}_p$. Let $\eta$ be the generic point of $C$ and $X_\eta$ the generic fibre of $f$. Note that it is commented in \cite{BCZ15} that the assumption of $\overline{\mathbb F}_p$ is used only for the case when $\kappa(K_{X_\eta}+\Delta_{X_\eta})=1$.  So we prove only this case when $k$ is an arbitrary algebraically closed field of characteristic $p>5$.  \cite{BCZ15} also assumes that $\kappa(K_{X_\eta}+\Delta_{X_\eta})\geq 0$, but this is implied by the assumption that $K_X+\Delta$ is $f$-nef together with \cite[Theorem 1.1]{Tan15}. We follow the idea of the proof of \cite[Theorem 1.6]{BCZ15}, with appropriate modifications for the more general field.\\

By assumption, $\kappa(K_{X_\eta}+\Delta_{X_\eta})=1$.  $(X_\eta, \Delta_{X_\eta})$ is a klt surface over the field $K(C)$, and hence by \cite[Theorem 1.5]{BCZ15} or \cite[Theorem 1.1]{Tan15}, $K_{X_\eta}+\Delta_{X_\eta}$ is semi-ample. From this information we construct the following commutative diagram
\begin{equation}
	\xymatrixcolsep{3pc}\xymatrixrowsep{3pc}\xymatrix{& Y\ar[dl]_\phi\ar[dr]^g &\\
	X\ar[rd]_f & & S\ar[dl]^h\\
	& C & }
	\end{equation} 
where $\phi$ is birational and $S$ is a smooth projective surface.   We can see that
\[\phi^*(K_X+\Delta)|_G\sim_\mbQ 0,\]
where $G$ is the generic fibre of $g$, because $K_{X_\eta}+\Delta_{X_\eta}$ is semi-ample.  
Then by \cite[Lemma 3.2]{Wal15} (possibly after replacing $Y$ with a higher model), there exists a $\mbQ$-Cartier divisor $D$ on $S$ such that $\phi^*(K_X+\Delta)\sim_\mbQ g^*D$.

Let $H$ be an ample Cartier divisor on $C$. We claim that $D+mh^*H$ is nef and big on $S$ for all $m\gg 0$.  That it is big is immediate because $D$ is big over $C$.  To see that it is nef, we use
\[\phi^*(K_X+\Delta+mf^*H)\sim_\mbQ g^*(D+mh^*H). \] and show that the left hand side is nef.  This follow from the cone theorem, for if $\Gamma$ is a curve on $X$ such that $(K_X+\Delta)\cdot\Gamma<0$ then $\Gamma$ is not contracted over $C$ because $K_X+\Delta$ is $f$-nef, and so $\Gamma\cdot f^*H$ is a positive integer.  On the other hand, the cone theorem \cite[Theorem 1.7]{Wal17} implies that every $K_X+\Delta$-negative extremal ray contains a curve $\Gamma'$ with $-6\leq (K_X+\Delta)\cdot\Gamma'<0$ and so $(K_X+\Delta+6f^*H)\cdot\Gamma'\geq 0$.

So $K_X+\Delta+mf^*H$ is nef for $m\>6$ and $\kappa(X, K_X+\Delta+mf^*H)=2$. By \cite[Theorem 1]{Tan15m} there exists a $\mbQ$-Cartier divisor $H'\sim_\mbQ mf^*H$ on $X$ such that $(X, \Delta+H')$ is klt. Since $\kappa(X, K_X+\Delta+H')=2$, by \cite[Theorem 1.1]{Wal15}, $K_X+\Delta+H'$ is semi-ample on $X$ and hence $K_X+\Delta$ is semi-ample over $C$.

\end{proof}

%%%%%%%%%%%%%%%%%%%%%%%%%%%%%%%%%%%%%%%%%%%%%%%%%%%%%%%%%%%%%%%%%%%%%%%%%%%%%%%%%%%%%%%%%%%%%%%%%%%%%%%%%%%%%%%%%%%%%%%%%%%%%%%%%%%%%%%%%%%%%%%%%%%%%%%%%%%%%%%%%%%%%%%%
%%%%%%%%%%%%%%%%%%%%%%%%%%%%%%%%%%%%%%%%%%%%%%%%%%%%%%%%%%%%%%%%%%%%%%%%%%%%%%%%%%%%%%%%%%%%%%%%%%%%%%%%%%%%%%%%%%%%%%%%%%%%%%%%%%%%%%%%%%%%%%%%%%%%%%%%%%%%%%%%%%%%%%%%
\subsection{Numerical dimension $0$ case} In this section we prove some results on abundance when $K_X+\Delta\equiv 0$.

\begin{comment}
	
\begin{theorem}\label{thm:nu0}
Let $(X, \Delta)$ be a projective klt $3$-fold pair over an algebraically closed field $k$ of characteristic $p>5$. Suppose that $K_X+\Delta\equiv 0$ and the Albanese dimension of $X$ is not $1$.  We must also assume that the minimal coefficient of $\Delta$ satisfies $\chr p>\frac{2}{\delta}$ when $X$ is of Albanese dimension $2$ and $\Supp\Delta$ intersects the generic fibre of the Albanese morphism. Then $K_X+\Delta\lin_{\mathbb{Q}}0$.\\
\end{theorem}	

\end{comment}
	
	\begin{theorem}\label{thm:not-uniruled}
Let $(X, \Delta)$ be a projective klt $3$-fold pair over an algebraically closed field $k$ of characteristic $p>5$. Also assume that $K_X+\Delta\num 0$ and $X$ is not uniruled.  Then $K_X+\Delta$ is semi-ample.
\end{theorem}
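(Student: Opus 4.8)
The plan is to reduce the statement to an effective non-vanishing for $K_X$ alone, and then invoke Theorem \ref{thm:uniruled-case} together with a fibration argument over a curve. First I would replace $(X,\Delta)$ by a $\mathbb{Q}$-factorial terminal model exactly as at the start of the proof of Theorem \ref{thm:kappa1}, so that $X$ is terminal and $\mathbb{Q}$-factorial; this changes neither the hypotheses (non-uniruledness and $K_X+\Delta\equiv 0$ are preserved under the crepant birational modification) nor the conclusion.

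The first key point is that non-uniruledness forces $\Delta=0$. Since the minimal model program for terminal $3$-folds is available in characteristic $p>5$ (\cite{HX15}, \cite{Bir16}, \cite{BW14}), if $K_X$ were not pseudo-effective we could run a $K_X$-MMP terminating in a Mori fibre space $X\dashrightarrow X'\to Z$; its general fibres are Fano, hence uniruled, so $X'$ and therefore the birationally equivalent $X$ would be uniruled, contrary to hypothesis. Thus $K_X$ is pseudo-effective. Combined with $K_X+\Delta\equiv 0$, this shows that $-\Delta\equiv K_X$ is pseudo-effective, so for an ample divisor $H$ we get $\Delta\cdot H^2\le 0$; as $\Delta\ge 0$ this forces $\Delta=0$ and $K_X\equiv 0$. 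Note it now suffices to prove $\kappa(X,K_X)\ge 0$: since $K_X\equiv 0$, any effective divisor $\mathbb{Q}$-linearly equivalent to a multiple of $K_X$ is numerically trivial and effective, hence zero, so $\kappa(X,K_X)\ge 0$ already yields $K_X\sim_{\mathbb{Q}}0$, which is semi-ample.

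To establish this non-vanishing I would use the Albanese morphism $\alpha\colon X\to\Alb(X)$ and split according to the Albanese dimension $d=\dim\alpha(X)$. When $d\ne 1$ the hypotheses of Theorem \ref{thm:uniruled-case} are satisfied: condition $(2)$ is precisely $d\ne 1$, and because we have arranged $\Delta=0$, the delicate condition $(3)$ (the case $d=2$ with $\Supp\Delta$ meeting the generic fibre) is vacuous, so the theorem applies unconditionally for all $p>5$ and gives $K_X\sim_{\mathbb{Q}}0$. The only remaining case is $d=1$. Here the Stein factorization of $\alpha$ produces a fibration $f\colon X\to B$ onto a smooth curve $B$ with $f_*\mcO_X=\mcO_B$; by Lemma \ref{lem:abelian-adjunction} and Remark \ref{rmk:albanese-contracts-rational-curves} the curve $B$ has genus $\ge 1$ and the general fibre $S$ is a non-uniruled klt surface with $K_S\equiv 0$, whose Albanese is forced to be trivial (otherwise $d$ would exceed $1$), so $S$ is of K3 or Enriques type. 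Since $K_X$ is $f$-nef, Proposition \ref{prop:relative-abundance} makes it $f$-semi-ample, and as it is numerically trivial on fibres it is relatively $\mathbb{Q}$-trivial; descending via \cite[Lemma 3.2]{Wal15} then yields $K_X\sim_{\mathbb{Q}}f^*D$ for a divisor $D$ on $B$ with $\deg D=0$ (forced by $K_X\equiv 0$).

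The main obstacle is the final step: showing that this degree-zero class $D$ is torsion in $\Pic^0(B)$. Through the canonical bundle formula one writes $D\sim_{\mathbb{Q}}K_B+M_B$ with $M_B$ the moduli part, and since $\deg D=0$ with $\deg K_B\ge 0$ one is reduced to proving that the degree-zero moduli part $M_B$ of a K3/Enriques-type fibration is torsion. This is exactly where positivity of the moduli part — equivalently, weak positivity of $f_*\omega_{X/B}$ — in positive characteristic is needed: numerical triviality of $M_B$ together with such positivity should force the family to be isotrivial and hence $D$ to be torsion. Controlling the moduli part and its semipositivity in characteristic $p$ is the delicate technical heart of the argument, and I expect it to be where the bulk of the work lies.
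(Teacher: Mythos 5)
Your overall skeleton is reasonable, and several pieces are sound: the reduction to $\Delta=0$ and $K_X\equiv 0$, the observation that it suffices to prove $\kappa(X,K_X)\geq 0$, and the deferral of the Albanese dimension $0$, $2$, $3$ cases to Theorem \ref{thm:uniruled-case} (with $\Delta=0$ its hypothesis $(3)$ is vacuous, and its proof does not invoke the \emph{statement} of Theorem \ref{thm:not-uniruled}, so this is not circular — though be aware it inverts the paper's logical order, since the paper's proof of Theorem \ref{thm:uniruled-case} reruns Cases I and III of the proof of Theorem \ref{thm:not-uniruled}, so those arguments would have to be inlined rather than referenced). One caveat on your $\Delta=0$ step: in characteristic $p$ the implication ``$K_X$ not pseudo-effective $\Rightarrow$ $X$ uniruled'' is not a black box; after running the MMP one must still show that the total space of a $3$-fold Mori fibre space is uniruled, which requires a Miyaoka--Mori bend-and-break argument. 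The paper sidesteps this by citing \cite[Corollary IV.1.14]{Kol96}, which packages exactly that bend-and-break statement and is characteristic-free.

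The genuine gap is the Albanese dimension $1$ case — the one case excluded from Theorem \ref{thm:uniruled-case}, and hence the case where the theorem's real content lies. Your argument up to $K_X\sim_\mbQ f^*D$ with $\deg D=0$ is fine (via Proposition \ref{prop:relative-abundance}, or directly by Tanaka's abundance on the generic fibre plus \cite[Lemma 3.2]{Wal15}), but the crucial final step — that $D$ is torsion — is precisely what you do not prove, and the tools you propose to prove it with are unavailable in positive characteristic: weak positivity of $f_*\omega_{X/B}$ fails in general in characteristic $p$ (it is known only under $F$-singularity hypotheses on the fibres), and there is no canonical bundle formula with a semipositive moduli part for such fibrations. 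Your structural claims about the fibres are also unjustified or false: general closed fibres need not be klt, or even normal, in characteristic $p$ (this is the ``badly singular closed fibres'' issue the paper explicitly flags), and even granting good fibres they need not be of K3/Enriques type — for instance $(E_1\times E_2\times E)/(\mbZ/2)$, with the involution acting by $-1$ on $E_1\times E_2$ and by a $2$-torsion translation on $E$, has Albanese dimension $1$ and abelian surfaces as fibres. The paper closes exactly the gap you left open using $F$-singularity theory in place of weak positivity: non-uniruledness gives geometric normality of the generic fibre \cite[Theorem 1.2]{Tan15f}; then $X_{\bar\eta}$ must be canonical (else $X$ is uniruled), hence strongly $F$-regular since $p>5$ \cite[Corollary 4.9]{Har98}; then the \emph{general} fibres are strongly $F$-regular \cite{PSZ13}; this makes Patakfalvi's theorem \cite[Theorem 3.16]{Pat14} applicable, giving $K_{X/C}\equiv -f^*K_C$ nef, which forces $C$ to be an elliptic curve; finally \cite[Theorem 3.2]{EZ16} yields semi-ampleness of the descended divisor. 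Without these (or equivalent) positive-characteristic inputs, your proof of the key case is incomplete, as your own closing sentence concedes.
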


\begin{remark}\label{rem:boundary-uniruled}
	Note that the boundary divisor $\Delta$ is forced to be $0$ by the assumptions that $X$ is not uniruled and $K_X+\Delta\num 0$, see \cite[Corollary IV.1.14]{Kol96}.
	\end{remark}

\begin{proof}
Since $X$ has klt singularities, a crepant $\mbQ$-factorial terminal model exists by \cite[Theorem 1.7]{Bir16}, 
so we may replace $X$ to assume it has $\mbQ$-factorial terminal singularities.  Note that if $X$ were not already canonical, it would be uniruled by a similar argument as in Remark \ref{rem:boundary-uniruled}.  
 
 %As $X$ is now terminal, if there is a birational morphism $f:Y\to X$ from a normal projective $\mbQ$-factorial variety $Y$ with $\kappa(K_Y)\>0$, then $\kappa(K_X)\>0$ by the projection formula.\\

Let $\alpha:X\to \Alb(X)$ be the Albanese morphism. Since $K_X\equiv 0$,  it follows from the discussion in Subsection \ref{subc:albanese-morphism} and \cite[Theorem 9.6.3]{FGA} that there exists a $\mbQ$-Cartier divisor $L$ on $\Alb(X)$ such that
\begin{equation}\label{eqn:albanese-pullback}
	K_X\sim_\mbQ \alpha^*L.
	\end{equation}
 We split the proof into four different cases depending on the dimension of $\alpha(X)$.\\

\noindent
\tbf{Case I:} $\dim\alpha(X)=0$.\\
In this case $\dim \Alb(X)=\dim\Pic^0(X)=0$.  In this case, $L$ is a divisor on a point, so we have  $K_X\sim_\mbQ 0$.\\

\noindent
\tbf{Case II:} $\dim\alpha(X)=1$.\\
Let $\xymatrixcolsep{2pc}\xymatrix{X\ar[r]^f & C\ar[r] & \alpha(X)}$ be the Stein factorization of $\alpha$. 

Let $\eta$ be the generic point of $C$.  Since $X$ is not uniruled, by \cite[Theorem 1.2]{Tan15f} the generic fiber $X_\eta$ is geometrically normal, i.e., $X_{\bar{\eta}}$ is normal.\\

Let $\pi:Y\to X_{\bar{\eta}}$ be the minimal resolution of the geometric generic fibre $X_{\bar{\eta}}$ of $f:X\to C$.  Then we have
 \[K_Y+\Delta = \pi^*(K_{X_{\bar{\eta}}}),\]
 for some $\mbQ$-divisor $\Delta\>0$.\\
We claim that $\Delta=0$, if not, i.e., if $\Delta>0$, then by running a $K_Y$-MMP we end up with a Mori fiber space, which gives a uniruling of $X_{\bar{\eta}}$, which in turns gives a uniruling of $X_{\eta}$, and thus a uniruling of $X$, a contradiction.  Hence $\Delta = 0$, and in particular, $X_{\bar{\eta}}$ has canonical singularities, which are strongly $F$-regular for surfaces in characteristic $p>5$ by \cite[Corollary 4.9]{Har98}.\\

We will show that $C$ is an elliptic curve.  Since $(X_{\bar{\eta}}, 0)$ is strongly $F$-regular, by \cite[Theorem B and Corollary 4.22]{PSZ13}, the general fibers of $f:X\to C$ are also strongly $F$-regular. Since $K_X$ is a pullback from the base $C$, by \cite[Theorem 3.16]{Pat14} $K_{X/C}\num -f^*K_C$ is nef.  It then follows that $C$ must be an elliptic curve, as it cannot be rational by Lemma \ref{lem:abelian-adjunction} and Remark \ref{rmk:albanese-contracts-rational-curves}. Furthermore, from the universal property of the Albanese morphism it then follows that $C\cong \Alb(X)$.\\

Since $f: X\to C$ is the Albanese morphism, from \eqref{eqn:albanese-pullback} we have $K_X\sim_\mbQ f^*(K_C+L)$, for some $\mbQ$-divisor $L$ on $C$.  Then by \cite[Theorem 3.2]{EZ16}, $L$ is semi-ample, and thus $K_X\lin_{\mathbb{Q}} 0$.\\

\noindent
\tbf{Case III:} $\dim\alpha(X)=2$.\\
Let $\xymatrixcolsep{2pc}\xymatrix{X\ar[r]^f & Y\ar[r]^g & V}$ be the Stein factorization of $\alpha$, where $V=\alpha(X)$. Let $\phi_Y:Y'\to Y$ be the minimal resolution of $Y$, and $\phi_X:X'\to X$ a resolution of the graph of the induced rational map $X\bir Y'$ sitting in the following commutative diagram:
\begin{equation}
	\xymatrixcolsep{3pc}\xymatrixrowsep{3pc}\xymatrix{ X'\ar[d]_{\phi_X}\ar[r]^{f'} & Y'\ar[d]^{\phi_Y}\\
	X\ar[r]_f\ar@{-->}[ur] & Y. }
	\end{equation}
Note that since $X$ has terminal singularities and the rational map $X\bir Y'$ is defined as a morphism over a dense open subset $U'\subset Y'$,  we can choose $X'$ (a resolution of the graph of $X\bir Y'$) in such a way that the exceptional divisors of $\phi_X:X'\to X$ do not intersect the generic fibre of $f':X'\to Y'$. So in particular we still have $K_{F'}=K_{X'}|_{F'}\sim_\mbQ 0$, where $F'$ is the generic fibre of $f'$.\\

Since $X$ is not uniruled, by  \cite[Theorem 1.2]{Tan15f} the geometric generic fibre $F'_{\bar{\eta}}$ of $f'$ is normal, i.e., $F'_{\bar{\eta}}$ is a smooth elliptic curve over $k(\bar{\eta})=\overline{K(Y)}$. Therefore the general fibres of $f'$ are all smooth elliptic curves, and hence by \cite[2.14]{BCZ15}, there is an effective divisor $E'\>0$ on $X'$ such that 
$$K_{X'}\lin_\mathbb{Q}f'^* K_{Y'} +E'.$$
If we show that $\kappa(K_{Y'})\geq 0$, then it follows that $\kappa(K_{X'})\geq0$.  As $X'$ has a birational morphism to $X$, this in turn implies that $\kappa(K_X)\geq 0$ by the projection formula. By construction, $Y'$ is a smooth surface of maximal Albanese dimension, in particular by Remark \ref{rmk:albanese-contracts-rational-curves} an open set of $Y'$ is disjoint from all rational curves in $Y'$. Therefore by the classification of surfaces (\cite[Theorem V.6.1]{Har77}) $\kappa(K_{Y'})\geq 0$.\\

\noindent
\tbf{Case IV:} $\dim\alpha(X)=3$\\
This case follows from Theorem \ref{thm:maximal-albanese-abundance} in the Appendix, which is due to Christopher Hacon.\\
%Note that, in this case $\Delta=0$, otherwise by running a $K_X$-MMP we can show that an open set of $X$ is covered by rational curves, which contradicts the maximality of the Albanese dimension of $X$ by Remark \ref{rmk:albanese-contracts-rational-curves}. So Hacon's theorem presented in the appendix is applicable here.
\end{proof}

\begin{theorem}\label{thm:uniruled-case}
	Let $(X, \Delta)$ be a projective klt $3$-fold pair over an algebraically closed field $k$ of characteristic $p>5$. Assume that the following conditions are satisfied
	\begin{enumerate}
	\item $K_X+\Delta\equiv 0$.
	\item The Albanese dimension of $X$ is not equal to $1$.
	\item If the Albanese dimension of $X$ is $2$ and $\Supp\Delta$ intersects the generic fiber of the Albanese morphism, then further assume that $\chr p>\max\{5, \frac{2}{\delta}\}$, where $\delta>0$ is the minimum non-zero coefficient of $\Delta$.\\
	
	 Then $K_X+\Delta\lin_{\mathbb{Q}}0$.
	\end{enumerate}
	\end{theorem}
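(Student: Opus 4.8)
The plan is to argue by the value of the Albanese dimension $\dim\alpha(X)$, which by hypothesis is $0$, $2$ or $3$. First I would replace $(X,\Delta)$ by a crepant $\mbQ$-factorial terminal model using \cite[Theorem 1.7]{Bir16}; this is harmless, since a crepant model preserves both $K_X+\Delta\num 0$ and the desired conclusion $K_X+\Delta\sim_\mbQ 0$, and changes neither the Albanese nor the relevant quantities (the coefficient $\delta$ and the behaviour of $\Supp\Delta$ along the generic fibre, as $\Delta$ is replaced by its strict transform). Next I record a uniform reduction: since $K_X+\Delta\num 0$ its class lies in $\Pic^\tau(X)\o\mbQ=\Pic^0(X)\o\mbQ$, so by the isomorphism \eqref{eqn:Pic-0-isomorphism} there is $L\in\Pic^0(\Alb(X))\o\mbQ$ with $K_X+\Delta\sim_\mbQ\alpha^*L$. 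If $\dim\alpha(X)=0$ then $\Alb(X)=0$, so $L=0$ and $K_X+\Delta\sim_\mbQ 0$; and if $\dim\alpha(X)=3$ the pair has maximal Albanese dimension and I would invoke Theorem \ref{thm:maximal-albanese-abundance} from the Appendix.

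This leaves the case $\dim\alpha(X)=2$. Let $f\colon X\to Y$ and $g\colon Y\to V=\alpha(X)$ be the Stein factorization of $\alpha$, so that $Y$ is a normal projective surface with $f_*\mcO_X=\mcO_Y$ and $g$ is finite. Since $g$ maps $Y$ finitely into the abelian variety $\Alb(X)$, Remark \ref{rmk:albanese-contracts-rational-curves} forces $Y$ to contain no rational curves; in particular $Y$ is non-uniruled of maximal Albanese dimension. Restricting to a general fibre $F$ of $f$ and using adjunction, $(F,\Delta|_F)$ is a klt curve with $K_F+\Delta|_F\num 0$, so exactly one of two things happens: either $\Delta|_F=0$, forcing $F$ to be elliptic, or $\Delta|_F\neq 0$, forcing $F\cong\mbP^1$ with $\deg\Delta|_F=2$; these alternatives are precisely whether $\Supp\Delta$ misses or meets the generic fibre. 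If $\Supp\Delta$ misses the generic fibre, I claim $X$ is non-uniruled: a rational curve through a general point of $X$ could not dominate $Y$ (which has no rational curves) and so would lie in a fibre, forcing a general fibre to be rational and contradicting $K_F\num 0$. Then $\Delta=0$ by Remark \ref{rem:boundary-uniruled}, and the conclusion is the content of Theorem \ref{thm:not-uniruled}.

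The genuinely new situation, in which hypothesis (3) is assumed, is when $\Supp\Delta$ meets the generic fibre. Here $X_\eta$ is a genus-zero curve over $K(Y)$ on which $(K_X+\Delta)|_{X_\eta}$ has degree $0$, hence is $\sim_\mbQ 0$; since $K_X+\Delta$ is nef, \cite[Lemma 3.2]{Wal15} (after passing to a higher model) produces a $\mbQ$-Cartier divisor $D_Y$ on $Y$ with $K_X+\Delta\sim_\mbQ f^*D_Y$, and $D_Y\num 0$ because $f$ is surjective. It therefore suffices to prove that the numerically trivial divisor $D_Y$ on the maximal-Albanese-dimension surface $Y$ is in fact $\sim_\mbQ 0$. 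To supply the missing effectivity I would run a relative MMP to bring $f$ to a standard conic bundle and apply a Kodaira-type canonical bundle formula $K_X+\Delta\sim_\mbQ f^*(K_Y+B_Y+M_Y)$, with $B_Y\geq 0$ the discriminant and $M_Y$ the moduli part. Then $K_Y+B_Y+M_Y\num 0$, and since $Y$ has maximal Albanese dimension $K_Y$ is pseudo-effective; granting the expected pseudo-effectivity of $M_Y$, the three pseudo-effective summands $K_Y$, $B_Y$, $M_Y$ must each be numerically trivial. Surface abundance (Tanaka) then gives $K_Y\sim_\mbQ 0$ and $B_Y=0$, while numerical triviality of the moduli part forces it to be torsion (isotriviality up to a cover), so $K_Y+B_Y+M_Y\sim_\mbQ 0$ and, pulling back, $K_X+\Delta\sim_\mbQ 0$.

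The main obstacle is exactly this last step in positive characteristic: establishing the canonical bundle formula with a klt base pair $(Y,B_Y)$ and, above all, controlling the moduli part $M_Y$ (its pseudo-effectivity and the torsion-ness of its numerically trivial incarnation), which is where the characteristic-zero theory has no automatic analogue. The hypothesis $p>2/\delta$ is precisely what makes the formula tractable: the horizontal part of $\Delta$ is a multisection of $f$ with components $\Gamma_i$ of degrees $d_i$ over $Y$ and coefficients $a_i\geq\delta$, so the relation $\sum_i a_i d_i=\deg\Delta|_F=2$ forces the total degree $\sum_i d_i\leq 2/\delta<p$, which guarantees that this multisection is separable and tamely ramified. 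Tameness is what keeps the different, and hence $B_Y$, a genuine klt boundary with coefficients $<1$ and allows the construction to proceed; making all of this precise is the crux of the argument.
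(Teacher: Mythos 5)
Your overall architecture matches the paper's: pass to a crepant $\mbQ$-factorial terminal model via \cite[Theorem 1.7]{Bir16}, split on the Albanese dimension, take the Stein factorization $X\to Y\to V$ in dimension $2$, and split further on whether $\Supp\Delta$ meets the generic fibre. However, the central case --- Albanese dimension $2$ with $\Supp\Delta$ meeting the generic fibre, which is the only place hypothesis (3) enters --- is not actually proven. You propose a canonical bundle formula $K_X+\Delta\sim_\mbQ f^*(K_Y+B_Y+M_Y)$ and then concede that establishing this formula with a klt base and controlling the moduli part (pseudo-effectivity, and torsion-ness when numerically trivial) in characteristic $p$ is ``the crux of the argument'' with ``no automatic analogue'' of the characteristic-zero theory. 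That crux is the theorem: a strategy with an acknowledged missing core is a gap, not a proof. The paper disposes of this case in one line by citing \cite[Proposition 6.4]{CTX15}, whose hypotheses ($K_X+\Delta\equiv 0$, relative dimension one over a surface with the horizontal part of $\Delta$ nonzero, $p>2/\delta$) are exactly the situation you set up; your tame-multisection heuristic for the condition $p>2/\delta$ is the right intuition for why that proposition holds, but you would need to either prove it or cite it.

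There are two further gaps. First, in the Albanese dimension $3$ case you invoke Theorem \ref{thm:maximal-albanese-abundance} directly for the pair, but that theorem requires $K_X\equiv 0$ with no boundary; one must first show $\Delta=0$, which the paper does by observing that $\Delta\neq 0$ together with $K_X+\Delta\equiv 0$ would make $X$ uniruled by \cite[Corollary IV.1.14]{Kol96}, contradicting maximal Albanese dimension via Remark \ref{rmk:albanese-contracts-rational-curves}. Second, in the case where $\Supp\Delta$ misses the generic fibre, your reduction to Theorem \ref{thm:not-uniruled} rests on the claim that rational general fibres would ``contradict $K_F\equiv 0$.'' In characteristic $p$ this is not automatic: a fibration in rational curves can have numerically trivial canonical class along the fibres (quasi-elliptic fibrations in characteristic $2$ and $3$ do exactly this, and \cite{BDPP13} is unavailable, as unirational supersingular K3 surfaces show). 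The claim is true for $p>5$, but only via a genus-change argument of Tate type; this is precisely what the paper imports through \cite[Corollary 1.5]{PW17}, applied over the open set $U=Y\setminus f(\Supp\Delta)$ where $K_X\equiv_f 0$, to conclude the geometric generic fibre is a smooth elliptic curve. With that citation supplied, your detour through non-uniruledness and Theorem \ref{thm:not-uniruled} does work and is a legitimate alternative to the paper's route, which instead repeats its Case III argument directly; without it, the step fails for exactly the characteristic-$p$ reasons this paper is designed to circumvent.
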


\begin{proof} 
Since $(X, \Delta)$ has klt singularities, by \cite[Theorem 1.7]{Bir16} we may assume that $(X, \Delta)$ has $\mbQ$-factorial terminal singularities.\\

If the Albanese dimension of $X$ is $0$, then by a similar argument as in the proof of Theorem \ref{thm:not-uniruled} it follows that $K_X+\Delta\sim_\mbQ 0$.\\

If the Albanese dimension of $X$ is $3$, and $\Delta\neq 0$, then from \cite[Corollary IV.1.14]{Kol96} it follows that $X$ is uniruled, which contradicts the assumption that $X$ has maximal Albanese dimension by Remark \ref{rmk:albanese-contracts-rational-curves}. Thus $\Delta=0$, and in this case the proof follows from Hacon's Theorem
 \ref{thm:maximal-albanese-abundance} in the appendix.\\

Now we will deal with the Albanese dimension $2$ case.\\
 Let $\xymatrixcolsep{2pc}\xymatrix{X\ar[r]^f & Y\ar[r]^g & V}$ be the Stein factorization of the Albanese morphism $\alpha: X\to\Alb(X)$, where $V=\alpha(X)$. We have $K_X+\Delta\sim_\mbQ f^*M$ (by \ref{eqn:Pic-0-isomorphism}) for some $\mbQ$-Cartier divisor $M$ on $Y$. There are two cases:\\
 
\noindent 
\textbf{Case I:} The support of $\Delta$ does not intersect the generic fiber of $f:X\to Y$.\\
In this case the proof runs identically to the Case III of Theorem \ref{thm:not-uniruled} with the following modifications.\\
In paragraph three of the Case III of Theorem \ref{thm:not-uniruled} we replace the arguments for $F'_{\bar{\eta}}$ being smooth with the following:  Since $\Supp\Delta$ does not intersect the generic fiber of $f:X\to Y$, by setting $U=Y\setminus f(\Supp\Delta)$ we see that $K_X\num_f 0$ over $U\subset Y$. Then by \cite[Corollary 1.5]{PW17} the geometric generic fiber $F_{\bar{\eta}}$ of $f$ is a smooth elliptic curve over ${k(\bar{\eta})}=\overline{K(Y)}$. In particular, $F'_{\bar{\eta}}$ is a smooth elliptic curve over ${k(\bar{\eta})}=\overline{K(Y')}=\overline{K(Y)}$, since the generic fibers of $f:X\to Y$ and $f':X'\to Y'$ are isomorphic by construction.\\

\noindent
\textbf{Case II:} The support of $\Delta$ intersects the generic fiber of $f:X\to Y$.\\ 
In this case let $\delta>0$ be the minimum non-zero coefficient of $\Delta$. Then the result follows from \cite[Proposition 6.4]{CTX15}.\\

	\end{proof}

%%%%%%%%%%%%%%%%%%%%%%%%%%%%%%%%%%%%%%%%%%%%%%%%%%%%%%%%%%%%%%%%%%%%%%%%%%%%%%%%%%%%%%%%%%%%%%%%%%%%%%%%%%%%%%%%%%%%%%%%%%%%%%%%%%%%%%%%%%%%%%%%%%%%%%%%%%%%%%%%%%%%%%%%
%%%%%%%%%%%%%%%%%%%%%%%%%%%%%%%%%%%%%%%%%%%%%%%%%%%%%%%%%%%%%%%%%%%%%%%%%%%%%%%%%%%%%%%%%%%%%%%%%%%%%%%%%%%%%%%%%%%%%%%%%%%%%%%%%%%%%%%%%%%%%%%%%%%%%%%%%%%%%%%%%%%%%%%%

\appendix

\section{Abundance for varieties of maximal Albanese dimension and $K_X\equiv 0$}
\begin{center}
\author{Christopher D. Hacon}\\ 
\address{Department of Mathematics \\  
University of Utah\\  
Salt Lake City, UT 84112, USA}\\
\email{hacon@math.utah.edu}
\end{center}

\begin{theorem}\label{thm:maximal-albanese-abundance} Let $X$ be a normal projective variety over an algebraically closed field $k$ of characteristic $p>0$ and $a:X\to A$ the Albanese morphism. If $\dim a(X)=\dim X$, $K_X$ is $\mbQ$-Cartier and $K_X\equiv 0$, then $(p-1)K_X\sim 0$.
\end{theorem}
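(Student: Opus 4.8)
The plan is to reformulate the conclusion as a statement about Frobenius. Writing $F=F_X:X\to X$ for the absolute Frobenius, pullback along $F$ acts as multiplication by $p$ on the class group, so for the reflexive canonical sheaf one has $F^{*}\omega_X\cong\omega_X^{[p]}$; consequently $(p-1)K_X\sim 0$ is equivalent to the Frobenius invariance $F^{*}\omega_X\cong\omega_X$, equivalently to $\omega_X^{[p-1]}\cong\mathcal O_X$. Since $K_X\equiv 0$, the sheaf $\omega_X^{[1-p]}=\mathcal O_X(-(p-1)K_X)$ is numerically trivial, so it suffices to exhibit a single nonzero global section of it: such a section is an effective Weil divisor $D\sim -(p-1)K_X$ with $D\equiv 0$, whence $D=0$ and $(p-1)K_X\sim 0$. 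Thus I would reduce the theorem to producing one nonzero element of $H^{0}(X,\omega_X^{[1-p]})$, equivalently (by Grothendieck duality for the finite morphism $F$) a nonzero $\mathcal O_X$-linear map $F_{*}\mathcal O_X\to\mathcal O_X$.

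The input from the abelian variety is that such maps exist on $A$: every abelian variety is globally $F$-split, the splitting being the Cartier-fixed translation-invariant top form $\Theta$ trivializing $\omega_A$. I would first record, via \eqref{eqn:Pic-0-isomorphism} and Lemma \ref{lem:abelian-adjunction}, that maximal Albanese dimension lets me compare $\omega_X$ with $a^{*}\omega_A=\mathcal O_X$ through the generically finite morphism $a$. If $a$ is separable, then $a^{*}\Theta$ is a nonzero section of $\omega_X$ cutting out an effective canonical divisor equal to the codimension-one ramification of $a$; numerical triviality of $K_X$ forces this divisor to vanish, so $a$ is \'etale in codimension one and $K_X\sim 0$, which is even stronger than required.

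The remaining, and genuinely characteristic-$p$, case is when $a$ is inseparable, where $a^{*}\Theta=0$ and the naive pullback of forms dies. Here I would absorb the inseparable part of $a$ into a power of the Frobenius and transport the Cartier-fixed datum $\Theta$ on $A$ to a nonzero Frobenius-linear map on $X$, i.e.\ a nonzero section of $\omega_X^{[1-p]}$; the twist $\omega^{[1-p]}$ governing $p^{-1}$-linear maps is precisely what pins the exponent at $p-1$ rather than forcing $K_X\sim 0$ outright. Throughout one passes to the Stein factorization to reduce to a finite cover, and uses that $X$ is normal (and may be taken terminal) so that the reflexive operations and the extension of sections across the codimension-$\geq 2$ singular locus are harmless.

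The hard part will be this transfer across the inseparable Albanese morphism: characteristic-$p$ pathologies (wild ramification, failure of Kodaira vanishing, and the breakdown of Hodge-theoretic generic vanishing of Green--Lazarsfeld--Hacon type) mean one cannot simply pull back forms or splittings. I expect the decisive step to require Frobenius-stable, Cartier-module techniques --- the stable image of iterated trace maps $S^{0}(X,\omega_X)=\bigcap_{e}\operatorname{Im}\!\big(F^{e}_{*}\omega_X\to\omega_X\big)$ together with the $F$-splitting of $A$ --- to guarantee that the Cartier-fixedness of $\Theta$ survives on $X$, yielding $F^{*}\omega_X\cong\omega_X$ and hence $(p-1)K_X\sim 0$.
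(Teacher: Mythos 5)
Your opening reduction is fine (a nonzero section of $\omega_X^{[1-p]}$, being an effective numerically trivial divisor class, forces $(p-1)K_X\sim 0$), and your separable case is also fine and even gives $K_X\sim 0$. But the load-bearing input for the inseparable case --- ``every abelian variety is globally $F$-split, the splitting being the Cartier-fixed translation-invariant top form'' --- is false. An abelian variety $A$ is Frobenius split if and only if it is \emph{ordinary}: splitting is equivalent to the Cartier operator acting nontrivially on $H^0(A,\omega_A)$, and that action is (semilinearly) the determinant of the Hasse--Witt matrix, i.e.\ of the Frobenius action on $H^1(A,\mcO_A)$. For a supersingular elliptic curve, or any non-ordinary abelian variety, the Cartier operator kills the invariant top form and no $F$-splitting exists. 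Since nothing in the theorem constrains the $p$-rank of $\Alb(X)$, your argument fails exactly on a nonempty set of cases it must cover. On top of that, even granting ordinarity, the decisive step --- transporting the splitting across an inseparable generically finite $a$ --- is stated only as an expectation (``I expect the decisive step to require\ldots''), not as an argument; that transfer is where essentially all of the difficulty lies.

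The paper's proof (Hacon's) uses the Cartier-module/$S^0$ machinery you gesture at, but deploys it in a way that needs no ordinarity and never produces an $F$-splitting. Since $a$ is generically finite, $S^0a_*\omega_X\neq 0$, and by \cite{HP13} there is $P\in\Pic^0(A)$ with $0\neq H^0(S^0a_*\omega_X\otimes P)\subset H^0(\omega_X\otimes a^*P)$; as $K_X\equiv 0$, this nonzero section trivializes $\omega_X\otimes a^*P$, so $K_X\sim a^*P^\vee$, and injectivity of $a^*$ on $\Pic^0$ (see \eqref{eqn:Pic-0-isomorphism}) pins down $V^0(S^0a_*\omega_X)=\{P\}$ as a single point. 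The Frobenius structure then enters through the Fourier--Mukai transform of the Cartier module $\left(F^e_*S^0a_*\omega_X\right)_e$ on the dual abelian variety: its support $Z=\{P\}$ is nonempty and satisfies $pZ\subset Z$, hence $P^{\otimes p}\cong P$, i.e.\ $P^{\otimes(p-1)}\cong\mcO_A$, and therefore $(p-1)K_X\sim 0$. In other words, the ``Cartier-fixedness'' that forces the exponent $p-1$ lives on $\hat{A}$ as the inclusion $pZ\subset Z$ for the transform's support --- a statement valid for every abelian variety --- not as a splitting of Frobenius on $A$. To repair your proposal you would have to replace the $F$-splitting input by this (or an equivalent) generic-vanishing argument; as written, the proof has a genuine gap.
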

\begin{proof} We follow the notation and conventions of \cite{HP13}.
Since $a$ is generically finite, it follows that $a_*F_*\omega _X\to a_*\omega _X$ is generically surjective and so $S^0a_*\omega _X\ne 0$ and $\Omega \ne 0$.  By the proof of \cite[4.2.6]{HP13} there exists $P\in {\rm Pic}^0(A)$ such that $0\ne H^0(S^0a_* \omega _X\otimes P)\subset H^0(\omega _X\otimes a^*P)$. It follows that  $K_X\sim  a^*P^\vee$. 
Let $$V^0(S^0a_*\omega _X):=\{Q\in {\rm Pic}^0(A)|  h^0(S^0a_*\omega _X\otimes Q)\ne 0\}.$$ 
Since $\{ P \}\subset V^0(S^0a_*\omega _X)\subset V^0(a_*\omega _X)=\{ P\}$, it follows that $$V^0(S^0a_*\omega _X)=\{ P \}$$ consists of a unique point.
Let $\Lambda _e=R\hat SD_A(F^e_*S^0a_*\omega _X)$ and $\Lambda ={\rm hocolim}(\Lambda _e)\in D^b_{\rm qcoh}(\hat A)$.
By \cite[3.1.2]{HP13}, $\Lambda \cong \varinjlim \mathcal H ^0(\Lambda _e)$ is a quasi-coherent sheaf. Let $\Lambda '_e={\rm im}(\mathcal H ^0(\Lambda _e)\to \Lambda)$, then $\Lambda =\varinjlim \Lambda '_e$. By \cite[3.1.1, 3.1.2]{HP13}, $$\Omega :=\varprojlim F^e_* S^0a_*\omega _X=((-1_A)^*D_ARS(\Lambda ))[-g] .$$ Since $\Omega \ne 0$, it follows that $\Lambda _e'\ne 0$. Let $Z$ be the support of $\Lambda '_0$, then $Z\ne \emptyset$ by what we have just observed. On the other hand, by cohomology and base change, $Z\subset {\rm Supp} \mathcal H ^0(\Lambda _0)\subset \{ P\}$ and so
$Z=\{P\}$. 
By the proof of \cite[3.3.5]{HP13}, we have that $pZ\subset Z$. It follows that $P^{\otimes p}\cong P$ so that $P^{\otimes p-1}\cong \mathcal O _A$. 
\end{proof}

\bibliographystyle{hep}
\bibliography{references.bib}

\end{document}